\documentclass[journal]{IEEEtran}
\usepackage{amsmath,amsfonts}
\allowdisplaybreaks
\usepackage{amsthm}
\usepackage[ruled]{algorithm2e}
\usepackage{cite}
\usepackage{booktabs}
\usepackage{float}
\usepackage{array}
\usepackage[caption=false,font=normalsize,labelfont=sf,textfont=sf]{subfig}
\usepackage{textcomp}
\usepackage{stfloats}
\usepackage{url}

\usepackage{xcolor}
\usepackage{verbatim}
\usepackage{graphicx}
\def\BibTeX{{\rm B\kern-.05em{\sc i\kern-.025em b}\kern-.08em
    T\kern-.1667em\lower.7ex\hbox{E}\kern-.125emX}}
\usepackage{balance}

\newtheorem{assumption}{Assumption}
\newtheorem{proposition}{Proposition}
\newtheorem{lemma}{Lemma}
\newtheorem{remark}{Remark}

\usepackage{comment}
\newboolean{showcomments}
\setboolean{showcomments}{true}

\newcommand{\fliu}[1]{\ifthenelse{\boolean{showcomments}}
        { \textcolor{red}{(FL:  #1)}}{}}
\newcommand{\lth}[1]{\ifthenelse{\boolean{showcomments}}
        { \textcolor{blue}{(THL:  #1)}}{}} 

\IEEEaftertitletext{%
  \vspace{-3.5\baselineskip}%
}

\begin{document}

\title{A Non-Iterative Algorithm for Clearing Two-Layer Energy-Sharing Markets with Voltage Constraints}

\author{\IEEEauthorblockN{Tonghua Liu$^{1}$, Yifan Su$^{2}$, Member, IEEE, Zhaojian Wang$^{3}$, Member, IEEE, and Feng Liu$^{1,*}$, Senior Member, IEEE}\\[-0pt]
\IEEEauthorblockA{$^{1}$Department of Electrical Engineering, Tsinghua University, Beijing, China\\[-0pt]
$^{2}$School of Electrical Engineering, Chongqing University, Chongqing, China\\[-0pt]
$^{3}$School of Automation and Intelligent Sensing, Shanghai Jiao Tong University, Shanghai, China\\[-0pt]
liuth21@mails.tsinghua.edu.cn, *lfeng@tsinghua.edu.cn}
}

\maketitle

\begin{abstract}
Real-time hierarchical energy-sharing markets are promising to coordinate large numbers of prosumers. Still, most existing clearing methods rely on linearized or DC power-flow models and do not explicitly handle reactive power or voltage-security constraints. With AC network constraints, the problem becomes a large-scale bilevel Mathematical Program with Equilibrium Constraints (MPEC) that is difficult to solve in real time. This paper develops a non-iterative clearing algorithm for two-layer energy-sharing markets with voltage constraints. We first derive an efficient best-response function for each lower-layer energy-sharing market and reduce the equilibrium search to one dimension by exploiting the pricing-coupling structure. We then embed this function into the upper-layer network-constrained problem and reformulate the bilevel MPEC as a single-level mixed-integer second-order cone program (MISOCP), which is computationally tractable. Case studies on the IEEE 123-bus system with 12,300 prosumers show that the proposed method preserves nodal voltages within prescribed limits and delivers solutions with maximum errors below 0.01\% in 0.829 s. 

\end{abstract}

\begin{IEEEkeywords}
Energy Sharing; Prosumer; Voltage Security; Non-iterative Algorithm; Hierarchical Market
\end{IEEEkeywords}

\section{Introduction}


The transition toward carbon neutrality has accelerated the deployment of distributed energy resources (DERs) in distribution networks, transforming end-users from passive consumers into active prosumers \cite{sotoPeertopeerEnergyTrading2021,guerreroDecentralizedP2PEnergy2019,wuSharingEconomyLocal2023}. To coordinate these resources more efficiently, Peer-to-Peer (P2P) energy sharing and local energy markets have emerged as promising mechanisms for enabling direct transactions among prosumers and improving local renewable-energy utilization, economic efficiency, and system flexibility \cite{chenReviewEnergySharing2022,zhouStateoftheArtAnalysisPerspectives2020,zedanReviewPeertopeerEnergy2024}.
Despite this promise, large-scale energy-sharing markets remain difficult to clear in a way that is both computationally scalable and physically feasible. 

The main challenge is that market outcomes must respect distribution network constraints, especially voltage security limits, while simultaneously accommodating massive prosumer participation \cite{guerreroDecentralizedP2PEnergy2019,kleinjanVoltageSupportBased2022}. This challenge becomes particularly acute when nonlinear alternating-current (AC) network constraints are considered, primarily motivating this work.

Existing studies can be cast into three categories. The first category incorporates network constraints directly into local energy markets. Representative approaches combine branch-flow models, second-order cone programming (SOCP) relaxations, Distribution Locational Marginal Pricing (DLMP)-based pricing, and network-aware transactive coordination to mitigate congestion and voltage violations \cite{farivarBranchFlowModel2013,golambahriDecentralizedEnergyFlexibility2025,wuOptimalManagementTransactive2020,yanPeertoPeerTransactiveEnergy2023,yanDistributionNetworkConstrainedOptimization2021,yanDistributedCoordinationCharging2023}. While these methods improve physical realism, they are typically tested on relatively small systems and do not fully address scalability to very large prosumer populations.

The second category focuses on computational scalability. Centralized optimization can, in principle, provide globally optimal clearing results, but it suffers from privacy concerns and severe growth in dimensionality in large-scale settings \cite{udabalaAggregatedEnergyInteraction2025}. Distributed approaches, including game-theoretic methods, decentralized market mechanisms, and Alternating Direction Method of Multipliers (ADMM)-based decompositions, offer several more scalable alternatives \cite{yaagoubiEnergyTradingSmart2017,sotoPeertopeerEnergyTrading2021,guerreroDecentralizedP2PEnergy2019}. However, once hard physical coupling (e.g. AC power flow) is enforced, especially nodal voltage constraints, iterative distributed algorithms may converge slowly and become difficult to tune in practice \cite{noorpiADMMbasedOptimumPower2020,louADMMBasedDistributed2024,baekSecurityConstrainedP2PEnergy2024,dengParallelMultiBlockADMM2017,liuDistributedVoltageControl2018,fengPeertoPeerEnergyTrading2023,dongDSOprosumersDuallayerGame2024}.

The third category seeks to combine scalability and physical feasibility through hierarchical or bilevel market architectures \cite{linNestedbilevelEnergy2023,hongbilevelGametheoreticDecisionmaking2023,zarabieFairnessRegularizedDLMPBasedbilevel2019,noorfatimabilevelPeertoPeerEnergy2024}. These frameworks preserve the interaction between local market behavior and network constraints. However, they usually lead to large Mathematical Programs with Equilibrium Constraints (MPECs) that still require extensive iterative computation. Su et al. \cite{suSharingEnergyWider2025} proposed a highly scalable two-layer market architecture with multiple lower-layer energy-sharing markets (L-ESMs) coordinated by an upper-layer energy-sharing market (U-ESM). This framework, nevertheless, relies on a linearized power-flow model and therefore does not explicitly enforce voltage constraints. Xia et al. \cite{xiaNonIterativeDecentralizedPeertoPeer2024} moved toward non-iterative market clearing under network constraints. Still, the required power-flow linearization and network reduction may reduce model fidelity in large-scale, heterogeneous distribution networks with a massive number of prosumers.

The aforementioned observations highlight a clear gap in the literature. Existing methods do not simultaneously provide the scalability of a two-layer market architecture with massive prosumers, explicit voltage-security guarantees under AC power flow, and a high-efficiency clearing procedure suitable for real-time implementation. The core bottleneck is that combining massive prosumer participation with nonlinear network constraints yields a high-dimensional bilevel MPEC whose lower-level equilibrium structure is intractable at scale.

To address this challenge and fill this gap, this paper derives an explicit best-response function by unfolding the specific structure of the L-ESM solution, thereby making the lower-layer equilibrium tractable and enabling non-iterative clearing with voltage constraints. The main contributions are as follows:
\begin{enumerate}
    \item \textbf{Efficient algorithms for determining the best-response functions of L-ESMs.} Building upon the established equivalence between the Nash equilibrium in an L-ESM and a convex optimization problem \cite{suSharingEnergyWider2025}, we analyze the solution structure to develop efficient algorithms that determine the best-response function. 
    Unlike standard multi-parametric programming, which requires a high-dimensional active-set search \cite{guptaNovelApproachMultiparametric2011}, we exploit the L-ESM's specific pricing coupling to collapse the high-dimensional search into a simple 1-dimensional search, thereby remarkably reducing the computational burden. This function serves as a computable surrogate for the complex equilibrium constraints of the L-ESM, eliminating the need for iterative equilibrium search and significantly reducing the computational complexity.

    \item \textbf{A high-efficiency non-iterative market-clearing algorithm with voltage constraints.}
    By integrating the determined best-response function into the U-ESM and employing a Second-Order Cone Program (SOCP) relaxation of the branch flow model, we transform the original bilevel MPEC into a single-level, tractable Mixed-Integer SOCP (MISOCP). It yields a non-iterative market-clearing algorithm that bridges the critical trade-off between computational efficiency and voltage limits, ensuring network feasibility for large-scale applications.
\end{enumerate}


The rest of the paper is organized as follows. Section II introduces the market model and bilevel formulation. Section III analyzes the L-ESM subproblem. Section IV develops the best-response function and the non-iterative reformulation. Section V reports case studies. Section VI concludes the paper.

\section{Market Description and Formulation}

This work builds on the two-layer hierarchical market architecture proposed by Su et al. \cite{suSharingEnergyWider2025} and further 
enforces additional voltage constraints. This architecture 
naturally decouples the prosumer-level trading details from the 
network-level operation, thereby providing a tractable framework to 
enforce rigorous AC power flow and voltage constraints at the upper 
level. To this end, we replace the original \emph{linearized power flow model} with the \emph{AC branch flow model}, yielding a nonlinear bilevel MPEC with second-order cone constraints, which is challenging to solve. 

As shown in Fig. \ref{fig:market_arch}, the market comprises a number of prosumers, multiple L-ESMs, and a single U-ESM. The L-ESMs handle local trading among prosumers but do not necessarily achieve a full balance, whereas the U-ESM performs global clearing and enforces network constraints.

\begin{figure}[h]
    \centering
\includegraphics[width=1\linewidth]{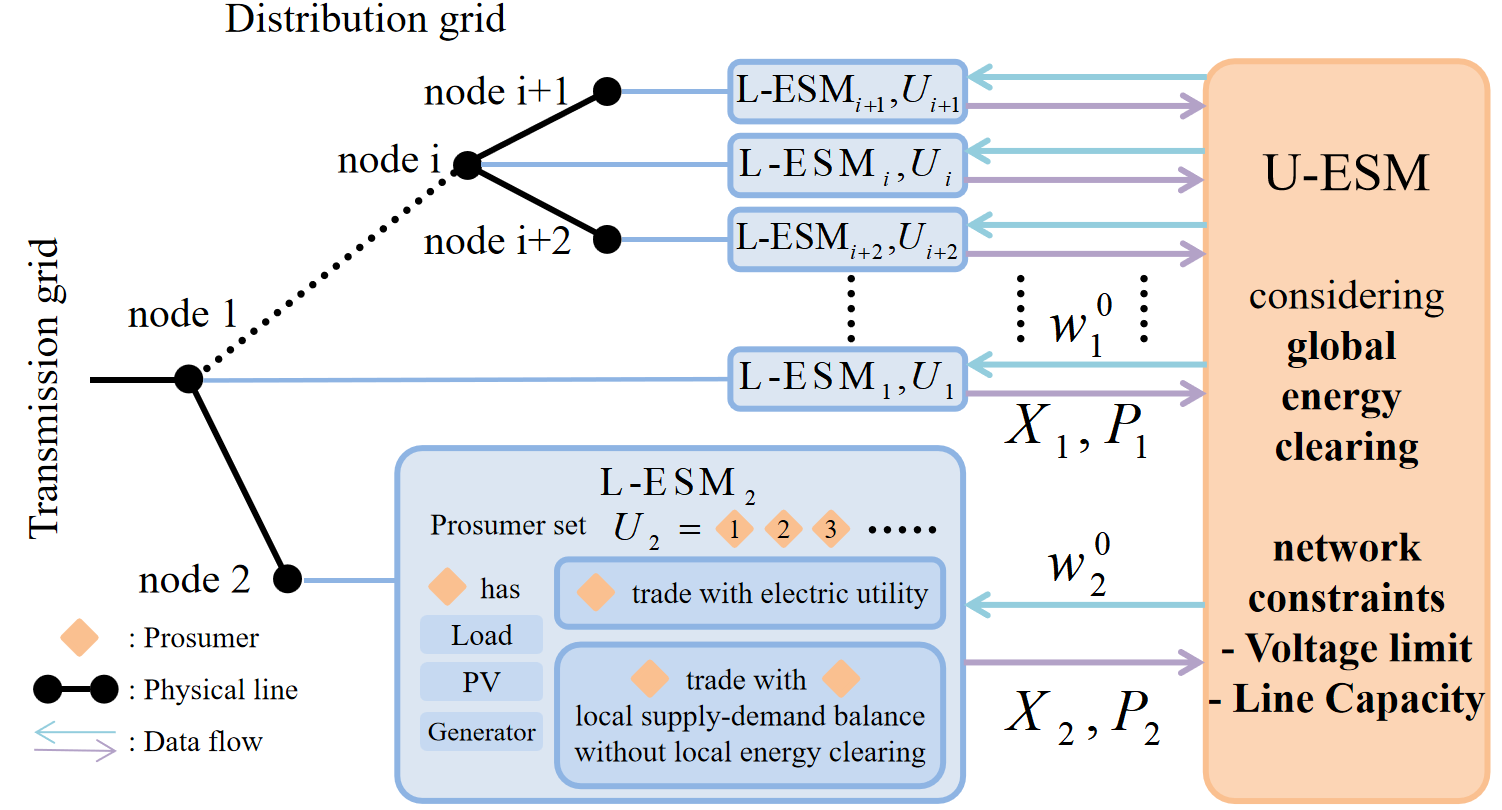}
    \caption{The two-layer hierarchical market and its clearing scheme}
    \label{fig:market_arch}
\end{figure}

\textbf{Prosumers:} Each prosumer is equipped with a load, a non-adjustable power generation device, such as photovoltaic panels, and an adjustable power generation device, such as a gas generator. Prosumers participate in L-ESMs or trade with the electric utility, seeking to minimize their individual costs, which consist of power generation costs and electricity trading fees. The set of prosumers that join L-ESM $i$ is denoted by $U_i$. 

\textbf{L-ESMs:} The L-ESMs, indexed by $i \in \mathcal{N}$, manage the
lower-layer trading among a cluster of local prosumers $U_i$. The L-ESM adjusts the local energy-sharing price around a base price, relying on the local supply-demand balance. Notably, the L-ESM does not perform a full local
clearing; instead, it allows a certain unbalanced portion to spill over to the U-ESM.

\textbf{U-ESM:} The U-ESM coordinates all L-ESMs 
to enable wider-area energy sharing. Operated as a public utility, it clears the market to minimize the total network loss subject to line limits and voltage constraints. The clearing price is delivered to individual L-ESMs to coordinate their local trading. 

The overall trading and market-clearing can be formulated as a Stackelberg game below:
\begin{subequations}\label{model:simple_stage_raw}
    \begin{align}
    & \min_{w_i^0, P_i, Q_i, P_{ij}, Q_{ij}, l_{ij}, v_i} \quad \sum_{(i,j)\in \mathcal{E}} r_{ij}l_{ij} \label{obj:USM}\\
    & \text{s.t.} \notag \\
    & \quad P_j = \sum_{k:j \to k} P_{jk} - \sum_{i:i \to j} (P_{ij} - r_{ij}l_{ij}), \quad \forall j \in \mathcal{N} \label{cons:node_P} \\
    & \quad Q_j = \sum_{k:j \to k} Q_{jk} - \sum_{i:i \to j} (Q_{ij} - x_{ij}l_{ij}), \quad \forall j \in \mathcal{N} \label{cons:node_Q}\\
    & \quad v_j = v_i - 2(r_{ij}P_{ij}+x_{ij}Q_{ij}) \notag \\
    & \quad \quad + (r^2_{ij} + x^2_{ij})l_{ij}, \quad \forall (i,j) \in \mathcal{E} \label{cons:node_v} \\
    & \quad P_{ij}^2 + Q_{ij}^2 = l_{ij}v_{i}, \quad \forall (i,j) \in \mathcal{E} \label{cons:line_limit}\\
    & \quad \underline{v_i} \leq v_i \leq \overline{v_i}, \quad \forall i \in \mathcal{N} \label{cons:v_lb_ub}\\
    & \quad 0 \leq l_{ij} \leq \overline{l_{ij}}, \quad \forall (i,j) \in \mathcal{E} \label{cons:l_lb_ub}\\
    & \quad \underline{Q_j} \leq Q_j \leq \overline{Q_j}, \quad \forall j \in \mathcal{N} \label{cons:Q_lb_ub}\\
    & \quad P_j = \sum_{m \in U_j}(x^*_m+p_m^{-*}-p_m^{+*}), \quad \forall j \in \mathcal{N} \label{cons:P_define}\\
    & \quad X_j = \sum_{m \in U_j}x^*_m, \quad \forall j \in \mathcal{N} \label{cons:X_define}\\
    & \quad \sum_{i \in \mathcal{N}}X_i = 0 \label{cons:sum_X}\\
    & \quad s_m^* = \{p_m^*, p_m^{+*}, p_m^{-*}, x_m^*\}, \forall m \in U_i, \forall i \in \mathcal{N} \notag \\
    & \quad \phantom{s_m^*} = \arg \min_{p_m,p_m^+,p_m^-,x_m} \frac{c_m}{2}p_m^2+b_mp_m+w^+_ip_m^+ \notag \\
    & \quad \phantom{s_m^* = \arg \min_{p_m,p_m^+,p_m^-,x_m}} - w_i^-p_m^--w_ix_m \label{obj:LSM_raw}\\
    & \quad \text{s.t.} \notag \\
    & \quad \quad D'_m +x_m+p^-_m = p_{solar,m}+p_m + p_m^+, \quad \forall m \in U_i \label{cons:energy_balance_raw}\\
    & \quad \quad w_i = w_i^0 - a_iX_i, \quad \forall i \in \mathcal{N} \label{cons:supply_demand}\\
    & \quad \quad 0 \leq p_m \leq \overline{p_m}, \quad \forall m \in U_i \label{cons:p_lb_ub}\\
    & \quad \quad p_m^+ \geq 0, p_m^- \geq 0, \quad \forall m \in U_i \label{cons:p_+_-}\\
    & \quad \quad p_m^+p_m^- = 0, \quad \forall m \in U_i \label{cons:su_relax}
    \end{align}
\end{subequations}
where $\mathcal{E}$ and $\mathcal{N}$ denote the sets of branches and nodes in the distribution network, respectively; 
$i, j \in \mathcal{N}$ are the node indices. 
For a branch $(i,j) \in \mathcal{E}$, $r_{ij}$ and $x_{ij}$ represent the resistance and reactance, 
while $P_{ij}$, $Q_{ij}$, and $l_{ij}$ are the active power flow, reactive power flow, 
and squared current magnitude, respectively. 
The symbols $\underline{v_i}$, $\overline{v_i}$, and $\overline{l_{ij}}$ represent the lower and upper bounds 
for voltage magnitude and the upper bound for squared current. 
At node $j$, $P_j$, $Q_j$, and $v_j$ denote the net injected active power, reactive power, and voltage magnitude. In the upper-layer market, $Q_j$ is modeled as the nodal reactive-power support available to the network operator, with bounds $\underline{Q_j}$ and $\overline{Q_j}$.
For the prosumers, $U_i$ denotes the set of prosumers in L-ESM $i$. 
For a specific prosumer $m \in U_i$, $x_m$ is the shared energy (positive for selling, negative for buying); 
$p_m$, $p_{solar,m}$, and $D'_m$ denote the adjustable generation, non-adjustable generation, and load, respectively. 
The parameters $c_m > 0$ and $0 < b_m < w_i^-$ are the cost coefficients of the adjustable generator, 
which has an upper bound $\overline{p_m}$. 
$p_m^+$ and $p_m^-$ denote the power purchased from and sold to the main grid at prices $w_i^+$ and $w_i^-$. 
The symbol $s_m^* = \{p_m^*, p_m^{+*}, p_m^{-*}, x_m^*\}$ denotes the optimal solution of the sub-problem. Accordingly, the set of optimal solutions for all prosumers in L-ESM $i$ is denoted as $S_i^* = \{s_m^*\}_{m \in U_i}$.
Finally, $w_i$ is the local sharing price, $w_i^0$ is the base price set by the U-ESM, 
$a_i > 0$ is the price elasticity coefficient regarding the uncleared energy, 
$P_i := \sum_{m \in U_i}(x^*_m+p_m^{-*}-p_m^{+*})$ denotes the total power exchange with the main grid, 
and $X_i := \sum_{m \in U_i} x^*_m$ denotes the uncleared sharing energy.

Based on Propositions 1 and 2 in \cite{suSharingEnergyWider2025}, the complementarity constraint \eqref{cons:su_relax} can be relaxed, and the Nash equilibrium of the L-ESM is equivalent to the solution of a convex optimization problem. 
This equivalence allows us to reformulate the objective function, thereby ensuring that it satisfies the pricing constraint \eqref{cons:supply_demand}. 
By further defining the net load as $D_m = D'_m - p_{solar,m}$, the model can be rewritten as follows.
{
\vspace{-10pt}
\begin{subequations}\label{model:simple_stage}
    \begin{align}
    & \min_{w_i^0,P_i,Q_i,P_{ij},Q_{ij},l_{ij},v_i} \quad \sum_{(i,j)\in \mathcal{E}} r_{ij}l_{ij} \label{obj:USM_new}\\
    & \text{s.t.}  \quad \eqref{cons:node_P}\text{--}\eqref{cons:sum_X}, \notag\\
    & \quad S_i^* = \{p_m^*,p_m^{+*},p_m^{-*},x_m^*\}_{m \in U_i}, \quad \forall i \in \mathcal{N} \notag\\
    & \quad S_i^* = \arg \min_{\substack{p_m,p_m^+,p_m^-,x_m,\\ \forall m \in U_i}} \Big\{ \sum_{m \in U_i} \big(\frac{c_m}{2}p_m^2+b_mp_m \notag\\
    & \qquad \qquad + w^+_ip_m^+ - w_i^-p_m^--w_i^0x_m\big) + \frac{a_i}{2}\sum_{m \in U_i}x_m^2 \notag\\
    & \qquad \qquad + \frac{a_i}{2}\Big(\sum_{m \in U_i}x_m\Big)^2 \Big\} \label{obj:LSM_new}\\
    & \quad \text{s.t.}  \quad \eqref{cons:p_lb_ub}, \eqref{cons:p_+_-}, \notag\\
    & \quad \quad D_m +x_m+p^-_m = p_m + p_m^+, \quad \forall m \in U_i \label{cons:balance_new}
    \end{align}
\end{subequations}
}

In this model, the decision variable $w_i^0$ influences the U-ESM solely through the L-ESM subproblem.
To characterize this influence, we rewrite the L-ESM $i$ subproblem as follows.

\begin{align}\label{model:L-ESM subproblem}
    \min_{\substack{p_m, p_m^+, p_m^-, x_m \\ \forall m \in U_i}} \quad
    & \sum_{m \in U_i} \Big( \frac{c_m}{2}p_m^2 + b_m p_m + w_i^+ p_m^+ - w_i^- p_m^- \\
    \displaybreak[1]
    &- w_i^0 x_m \Big)  + \frac{a_i}{2}\sum_{m \in U_i}x_m^2 + \frac{a_i}{2}\Big(\sum_{m \in U_i}x_m\Big)^2 \nonumber\\
    \text{s.t. } 
    & D_m +x_m+p^-_m = p_m + p_m^+ \,: \lambda_{\text{eq},m}, \forall m \in U_i\nonumber\\
    & 0 \leq p_m \leq \overline{p_m} \,: \lambda_{p,\text{lb},m}, \lambda_{p,\text{ub},m}, \forall m \in U_i\nonumber\\
    & p_m^+ \geq 0,\ p_m^- \geq 0 \,: \lambda_{p,+,m}, \lambda_{p,-,m}, \forall m \in U_i.\nonumber
\end{align}

Specifically, $w_i^0$ affects the U-ESM only through the L-ESM's aggregate variables: the uncleared sharing energy $X_i:= \sum_{m \in U_i} x_m^*$ and
the total power exchange
$P_i:= \sum_{m \in U_i}(x_m^* + p_m^{-*} - p_m^{+*})$.
If we can explicitly characterize the best-response function 
$P_i(X_i)$, the implicit L-ESM equilibrium constraints
can be replaced by explicit functions, transforming
the original bilevel model \eqref{model:simple_stage}
into the following single-level problem:
\begin{subequations}\label{model:simple_stage_with_X}
    \begin{align}
    & \min_{X_i, P_i, Q_i, P_{ij}, Q_{ij}, l_{ij}, v_i} \quad \sum_{(i,j)\in \mathcal{E}} r_{ij}l_{ij} \label{obj:USM_X}\\
    & \text{s.t.} \notag \\
    & \quad P_j = \sum_{k:j \to k} P_{jk} - \sum_{i:i \to j} (P_{ij} - r_{ij}l_{ij}), \forall j \in \mathcal{N} \label{cons:P_node_new}\\
    & \quad P_j = P_j(X_j), \forall j \in \mathcal{N} \label{cons:best_response}\\
    & \quad \eqref{cons:node_Q}, \eqref{cons:node_v}, \eqref{cons:v_lb_ub}, \eqref{cons:l_lb_ub}, 
    \eqref{cons:Q_lb_ub},
    \eqref{cons:sum_X} \notag \\
    & \quad P_{ij}^2 + Q_{ij}^2 \leq l_{ij}v_{i}, \forall (i,j) \in \mathcal{E} \label{cons:line_limit_relaxed}
    \end{align}
\end{subequations}

Thus, clearing the market reduces to deriving
the best-response function  $P_i(X_i)$, which depends on the intrinsic mathematical structure of the subproblem
\eqref{model:L-ESM subproblem}.

\section{Structural Properties of the L-ESM Subproblem}

To derive $P_i(X_i)$, we start with the structural
properties of the L-ESM subproblem
\eqref{model:L-ESM subproblem}. 

\begin{lemma}\label{lem:LSM_KKT}
    For the L-ESM subproblem \eqref{model:L-ESM subproblem},
    let $(p_m^*, p_m^{+*}, p_m^{-*}, x_m^*)$ be an optimal
    solution of prosumer $m$.
     Then one of the following modes must be satisfied:
    \begin{enumerate}
        \item \textbf{Mode 1:} $c_m(x_m^*+D_m)+b_m-w_i^0+a_i(x_m^*+x_{-m}^*)+ a_i x_m^* = 0$ and $p_m^* = x_m^*+D_m$.
        \item \textbf{Mode 2:} $w_i^+-w_i^0 + a_i(x_m^*+x_{-m}^*) + a_i x_m^* = 0$ and $p_m^* = \min(\overline{p_m},\frac{w_i^+-b_m}{c_m})$.
        \item \textbf{Mode 3:} $w_i^--w_i^0 + a_i(x_m^*+x_{-m}^*) + a_i x_m^* = 0$ and $p_m^* = \min(\overline{p_m},\frac{w_i^--b_m}{c_m})$.
        \item \textbf{Mode 4:} $x_m^* = \overline{p_m} - D_m$ and $p_m^* = \overline{p_m}$.
    \end{enumerate}
    Here, $x_{-m}^* := \sum_{j \in U_i, j\neq m}x_j^*$ for all $m \in U_i$, and $U_{i,k} \subseteq U_i$ denotes the set of prosumers in Mode $k$ ($k=1,\dots,4$).
\end{lemma}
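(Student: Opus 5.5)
The plan is to write the KKT conditions of the L-ESM subproblem \eqref{model:L-ESM subproblem} and split into cases according to which multipliers are active. The problem is a convex QP with linear constraints, so Slater/linearity gives KKT necessity. We use the multipliers as labeled in \eqref{model:L-ESM subproblem}, with the convention $\lambda\ge 0$ for inequalities written as $g\le 0$. Let $G_m := a_i(x_m^*+x_{-m}^*)+a_i x_m^*$; this is the derivative of the two quadratic coupling terms with respect to $x_m$. Stationarity then reads
\begin{align*}
&x_m:\ -w_i^0 + G_m + \lambda_{\text{eq},m} = 0,\\
&p_m:\ c_m p_m^* + b_m - \lambda_{\text{eq},m} - \lambda_{p,\text{lb},m} + \lambda_{p,\text{ub},m} = 0,\\
&p_m^+:\ w_i^+ - \lambda_{\text{eq},m} - \lambda_{p,+,m} = 0,\\
&p_m^-:\ -w_i^- + \lambda_{\text{eq},m} - \lambda_{p,-,m} = 0.
\end{align*}
Here we take the equality as $D_m+x_m+p_m^- - p_m - p_m^+=0$ and adjust signs consistently. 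These give $w_i^- \le \lambda_{\text{eq},m} \le w_i^+$, so $\lambda_{\text{eq},m} = w_i^0 - G_m$ plays the role of an effective marginal price.

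\textbf{Case split on $\lambda_{\text{eq},m}$.}
\begin{itemize}
\item If $w_i^- < \lambda_{\text{eq},m} < w_i^+$, then $\lambda_{p,+,m},\lambda_{p,-,m}>0$. Hence $p_m^{+*}=p_m^{-*}=0$, and balance gives $p_m^*=x_m^*+D_m$.
 \begin{itemize}
 \item If $p_m^*$ is interior, then $\lambda_{\text{eq},m}=c_m p_m^*+b_m$, which is exactly Mode 1.
 \item If $p_m^*=\overline{p_m}$, this is Mode 4.
 \item If $p_m^*=0$, then $c_m\cdot 0+b_m=\lambda_{\text{eq},m}+\lambda_{p,\text{lb},m}\ge \lambda_{\text{eq},m}> w_i^->b_m$ is impossible. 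So the lower bound can never be active, using the assumption $b_m<w_i^-$. Mode 1 is therefore still consistent, since the Mode 1 equation holds with $p_m^*=0$ only if the multiplier is zero.
 \end{itemize}
\item If $\lambda_{\text{eq},m}=w_i^+$, stationarity in $x_m$ gives the Mode 2 equation. The $p_m$ condition makes $p_m^*$ the projection of $(w_i^+-b_m)/c_m$ onto $[0,\overline{p_m}]$. This projection is positive because $w_i^+>w_i^->b_m$, so it equals $\min(\overline{p_m},(w_i^+-b_m)/c_m)$.
\item If $\lambda_{\text{eq},m}=w_i^-$, the same argument gives Mode 3.
\end{itemize}

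A few points need care:
\begin{itemize}
\item The boundary overlaps, for example $\lambda_{\text{eq},m}=w_i^+$ with $p_m^*=\overline{p_m}$, are fine because the statement only claims that at least one mode holds.
\item The modes use an interior-or-bound dichotomy on $p_m$, which is justified above by excluding $p_m^*=0$.
\item Since $w_i^+>w_i^-$ (implicit in the market setup), the three cases for $\lambda_{\text{eq},m}$ are exhaustive.
\end{itemize}

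The main obstacle is getting the signs and multiplier conventions right, so that $\lambda_{\text{eq},m}$ lands in $[w_i^-,w_i^+]$ and the coupling gradient $G_m$ matches the stated expression. The exclusion of $p_m^*=0$ via $b_m<w_i^-$ is the only genuinely non-mechanical step.
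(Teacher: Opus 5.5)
Your proof is correct. It uses the same KKT stationarity system as the paper but splits the cases differently. The paper splits on the primal trading variables (no trade, $p_m^{+*}>0$, or $p_m^{-*}>0$), and it needs Proposition~1 of Su et al.\ to make these three cases exhaustive. You instead split on the dual price $\lambda_{\text{eq},m}\in[w_i^-,w_i^+]$.

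Your split buys some rigor at no extra cost:
\begin{itemize}
\item You do not need the external complementarity result. In the interior case, complementary slackness forces $p_m^{+*}=p_m^{-*}=0$, and at either endpoint the mode follows from $\lambda_{\text{eq},m}$ alone.
\item You explicitly rule out an active lower bound $p_m^*=0$ using $b_m<w_i^-$. The paper's Case~1 only sets $\lambda_{p,\text{ub},m}=0$ and then asserts Mode~1, tacitly assuming $\lambda_{p,\text{lb},m}=0$.
\item You derive $p_m^*=\min(\overline{p_m},(w_i^{\pm}-b_m)/c_m)$ in Modes~2--3 as a projection onto $[0,\overline{p_m}]$, which the paper never spells out.
\end{itemize}

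The paper's primal split, in turn, maps each mode directly onto the prosumer's trading status. Degenerate points such as $\lambda_{\text{eq},m}=w_i^+$ with $p_m^{+*}=0$ are labeled differently by the two routes. This is harmless, because the lemma only asserts that at least one mode holds.

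Two small remarks. First, exhaustiveness of your three cases follows from $w_i^-\le\lambda_{\text{eq},m}\le w_i^+$ alone, without needing $w_i^+>w_i^-$. Second, your sentence about Mode~1 remaining ``consistent'' when $p_m^*=0$ is superfluous. Your contradiction already shows that $p_m^*=0<\overline{p_m}$ cannot occur in that case, and $\overline{p_m}=0$ falls under Mode~4.
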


The proof of Lemma \ref{lem:LSM_KKT} is provided in Appendix \ref{app:A}.

With the optimal mode of each prosumer identified, we now investigate the functional dependence of the aggregate variables $X_i$ and $P_i$ on the base price $w_i^0$.

Although $X_i$ and $P_i$ are formally defined as variables of the U-ESM, their values are entirely determined by the equilibrium solution of the L-ESM subproblem \eqref{model:L-ESM subproblem} for any given $w_i^0$. Therefore, they can be rigorously treated as functions of $w_i^0$ induced by the L-ESM.

\begin{lemma}\label{lem:piecewise}
    For the L-ESM subproblem \eqref{model:L-ESM subproblem}, the aggregate variables $P_i(w_i^0) = \sum_{m \in U_i}(x_m^*(w_i^0) + p_m^{-*}(w_i^0) - p_m^{+*}(w_i^0))$ and $X_i(w_i^0) = \sum_{m \in U_i} x_m^*(w_i^0)$ are piecewise linear in $w_i^0$. Furthermore, every breakpoint (i.e., a point where the slope changes) of $P_i(w_i^0)$ is necessarily a breakpoint of $X_i(w_i^0)$. 
\end{lemma}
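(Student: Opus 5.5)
The plan is to view the L-ESM subproblem \eqref{model:L-ESM subproblem} as a strictly convex parametric QP in the single parameter $w_i^0$, which enters only linearly through $-w_i^0\sum_m x_m$.

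\textbf{Step 1: uniqueness and continuity of $x^*$.} In the variables $x_m$ the objective is strongly convex, because of the term $\frac{a_i}{2}\sum_m x_m^2$ with $a_i>0$. After eliminating $p_m$ via the balance constraint, the cost $c_m p_m^2/2$ is also strictly convex in $p_m$. The only remaining source of non-uniqueness is the split between $p_m^+$ and $p_m^-$. Because $w_i^+>w_i^-$, holding both positive is never optimal, so the net $p_m^+-p_m^-$ is unique. Hence $x_m^*(w_i^0)$, $p_m^*(w_i^0)$ and the net grid exchange are single-valued. Standard parametric-QP theory then makes them continuous and piecewise affine in $w_i^0$, with finitely many critical regions given by the active sets. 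This already shows that $X_i$ and $P_i=\sum_m(x_m^*+p_m^{-*}-p_m^{+*})$ are piecewise linear.

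\textbf{Step 2: explicit affine forms via Lemma~\ref{lem:LSM_KKT}.} Fix an interval of $w_i^0$ on which the mode partition $\{U_{i,k}\}$ is constant. Each mode condition then gives $x_m^*$ as an affine function of $w_i^0$ and $X_i$:
\begin{itemize}
\item in Mode~1, $x_m^*=(w_i^0-a_iX_i-b_m-c_mD_m)/(c_m+a_i)$;
\item in Modes~2 and~3, $x_m^*=(w_i^0-a_iX_i-w_i^{\pm})/a_i$;
\item in Mode~4, $x_m^*$ is constant.
\end{itemize}
Summing over $m$ and solving the resulting scalar linear equation for $X_i$ gives $X_i$ affine in $w_i^0$ on that interval. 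Its slope is $S/(1+a_iS)$ with $S=\sum_{U_{i,1}}1/(c_m+a_i)+|U_{i,2}\cup U_{i,3}|/a_i\ge0$.

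Next, use $x_m+p_m^--p_m^+=p_m-D_m$, which shows $P_i=\sum_m(p_m^*-D_m)$. Here $p_m^*=x_m^*+D_m$ in Mode~1, and $p_m^*$ is constant in Modes~2--4. So on each interval
\[
P_i=\text{const}+\sum_{m\in U_{i,1}}x_m^*,
\]
and each Mode~1 $x_m^*$ is an affine function of $(w_i^0-a_iX_i)$.

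\textbf{Step 3: breakpoints, the main obstacle.} The key observation is that every $x_m^*$, and hence $P_i$, depends on $w_i^0$ only through the scalar $z:=w_i^0-a_iX_i(w_i^0)$, which is the local sharing price $w_i$. Each $x_m^*$ is a continuous, nondecreasing, piecewise-linear function of $z$ that is independent of the other prosumers. So $X_i=g(z)$ and $P_i=h(z)$, and the relation $z=w_i^0-a_ig(z)$ makes $z$ a strictly increasing, continuous, piecewise-linear function of $w_i^0$.

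The argument then splits into two cases at a breakpoint $w$ of $P_i$:
\begin{itemize}
\item \emph{Slope of $z$ changes at $w$.} Since $dz/dw_i^0=1-a_i\,dX_i/dw_i^0$, a change in the slope of $z$ is exactly a change in the slope of $X_i$.
\item \emph{Slope of $z$ does not change at $w$.} Then the kink in $P_i$ must come from a kink of $h$ at the corresponding $z$, that is, from some prosumer entering or leaving Mode~1. I must show that $g$ also kinks there. Such a transition changes that prosumer's $dx_m^*/dz$ from $1/(c_m+a_i)$ to $1/a_i$ or to $0$, so the slope of $g$ changes unless other prosumers' simultaneous transitions cancel it exactly.
\end{itemize}

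The delicate point is ruling out exact cancellation, where two prosumers switch at the same $z$ with compensating slope changes. My first attempt would be to group the changes by sign. The Mode~4 transitions $1\to4$ lower both $g'$ and $h'$ with matching signs. Only the transitions $1\leftrightarrow2,3$ move $g'$ while lowering $h'$. If cancellation does occur, I would treat that coincidence as non-generic and absorb it by stating breakpoints of $P_i$ as points where the active mode set changes, as the paper's later algorithm seems to assume.
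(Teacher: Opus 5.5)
\textbf{Verdict: there is a genuine gap in Step 3, and it cannot be closed, because the second claim of the lemma is false without a non-degeneracy assumption.}

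Your Steps 1--2 match the paper's argument: parametric-QP piecewise affinity, with $X_i$ and $P_i$ as linear images of the unique solution. Your remark that uniqueness comes from $w_i^+>w_i^-$ is more careful than the paper's ``strictly convex'' claim, since the objective is only linear in $(p_m^+,p_m^-)$.

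The gap is that you never prove that every breakpoint of $P_i$ is a breakpoint of $X_i$. ``Absorbing'' cancellations by calling every mode change a breakpoint changes the statement, because the lemma defines a breakpoint as a slope change. Your reduction to $z=w_i^0-a_iX_i$ is the right tool. With $u_m:=1/(c_m+a_i)$ and $v:=1/a_i$,
\[
\frac{\mathrm{d}X_i}{\mathrm{d}w_i^0}=\frac{g'}{1+a_ig'},\qquad \frac{\mathrm{d}P_i}{\mathrm{d}w_i^0}=\frac{h'}{1+a_ig'},\qquad h'=\sum_{m\in U_{i,1}}u_m,\qquad g'=h'+v\,|U_{i,2}\cup U_{i,3}|,
\]
so the claim is exactly that $h'$ cannot jump while $g'$ stays fixed.

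Your bookkeeping misses two transitions. The transitions $3\to4$ and $4\to2$ shift $g'$ by $-v$ and $+v$ without touching $h'$. Also, $3\to1$ raises $h'$ rather than lowering it. With these included, cancellation really occurs:
\begin{itemize}
\item Take $a_i=1$, $w_i^-=1$, $w_i^+=2$.
\item Prosumers A and B have $c=1$, $b=0.5$, $D=0$, $\overline{p}=10$, so $\alpha=0.5$.
\item Prosumer C has $c=1$, $b=0.5$, $D=0.6$, $\overline{p}=0.1$, so $\gamma_C=-0.5\le\alpha_C=-0.1$.
\end{itemize}
At $z=1.5$, A and B both go $3\to1$ (since $z=w_i^-+a_i\alpha$), and C goes $4\to2$ (since $z=w_i^++a_i\gamma_C$). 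Then $g'=1+1+0=2$ below and $g'=\tfrac12+\tfrac12+1=2$ above, while $h'$ jumps from $0$ to $1$. Solving the KKT system directly gives $(X_i,P_i)=(13/30,\,1/2)$, $(1/2,\,1/2)$, $(17/30,\,8/15)$ at $w_i^0=1.9,\,2,\,2.1$. So $X_i$ has slope $2/3$ on both sides of $w_i^0=2$, while the slope of $P_i$ jumps from $0$ to $1/3$.

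The paper's one-line proof has the same hole. ``Breakpoints are induced by mode transitions'' only shows that both functions can kink only at mode transitions. It does not show that $X_i$ kinks wherever $P_i$ does.

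A correct version needs a non-degeneracy assumption, for example that no two prosumers change mode at the same $w_i^0$. Algorithm~\ref{alg:X_general} already assumes this tacitly, since it updates a single $m^*$ per breakpoint. Under that assumption your framework finishes the proof immediately:
\begin{itemize}
\item A single transition changes $g'$ by one of $u_m-v$, $-v$, $v-u_m$, $-u_m$, $+v$.
\item All of these are nonzero because $0<u_m<v$.
\item Hence every mode transition is a kink of $X_i$, and so every kink of $P_i$ is one too.
\end{itemize}
The same conclusion holds more weakly whenever simultaneous transitions all move $g'$ in the same direction. Without such an assumption, the true statement is the one you guessed: both functions are affine between consecutive mode changes. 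That is all Algorithm~\ref{alg:P_general} actually uses. You should present it as a corrected statement backed by the counterexample, not as a proof of the original.
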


\begin{proof}
    The L-ESM subproblem \eqref{model:L-ESM subproblem} is a strictly convex quadratic program with linear parameter $w_i^0$.
    Standard results from multi-parametric quadratic programming imply that its optimal solution is a continuous piecewise affine function of $w_i^0$ \cite[Theorem~2, Chapter~1]{pistikopoulosMultiparametricProgrammingTheory2007}.
    Because $P_i$ and $X_i$ are linear combinations of that solution, they are also piecewise linear.
    Their breakpoints are induced by changes in the active primal or dual constraint set, i.e., by the mode transitions characterized in Lemma \ref{lem:LSM_KKT}.
\end{proof}

\begin{lemma}\label{lem:dx_dw_i^0}
    In the L-ESM subproblem \eqref{model:L-ESM subproblem}, the shared energy of a specific prosumer $x^*_m$ and the uncleared sharing energy of an L-ESM $X_i$ are non-decreasing with respect to the base price $w_i^0$. 
    Specifically, within each linear segment where the functions are differentiable:
    \begin{equation*}
        \frac{\mathrm{d} x^*_m}{\mathrm{d} w_i^0} \geq 0, \forall m \in U_i;\qquad \frac{\mathrm{d} X_i}{\mathrm{d} w_i^0} \geq 0
    \end{equation*}
    The equality $\frac{\mathrm{d} x^*_m}{\mathrm{d} w_i^0} = 0$ holds if and only if prosumer $m$ is operating in Mode 4. The equality $\frac{\mathrm{d} X_i}{\mathrm{d} w_i^0} = 0$ holds if and only if all prosumers in L-ESM $i$ are in Mode 4.
\end{lemma}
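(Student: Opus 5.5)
Within a fixed linear segment of $w_i^0$ the mode assignment of Lemma~\ref{lem:LSM_KKT} is constant by Lemma~\ref{lem:piecewise}, so I would differentiate the stationarity conditions of each mode with respect to $w_i^0$ and solve the resulting linear system in closed form. Write $\dot x_m := \mathrm{d}x_m^*/\mathrm{d}w_i^0$ and $\dot X := \mathrm{d}X_i/\mathrm{d}w_i^0 = \sum_{m\in U_i}\dot x_m$, and note that $x_m^*+x_{-m}^* = X_i$. Differentiating each mode gives
\begin{align*}
\text{Mode 1:}\quad & (c_m+a_i)\dot x_m = 1 - a_i\dot X,\\
\text{Modes 2, 3:}\quad & a_i\dot x_m = 1 - a_i\dot X,\\
\text{Mode 4:}\quad & \dot x_m = 0,
\end{align*}
because in Modes 2 and 3 the prices $w_i^\pm$ and the constant $p_m^*$ do not depend on $w_i^0$, while in Mode 4 $x_m^*$ is pinned at $\overline{p_m}-D_m$.

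Set $k_m := 1/(c_m+a_i)$ for $m\in U_{i,1}$, $k_m := 1/a_i$ for $m\in U_{i,2}\cup U_{i,3}$, and $k_m := 0$ for $m\in U_{i,4}$. Then $\dot x_m = k_m(1-a_i\dot X)$ for every $m$. Summing over $m$ and writing $K:=\sum_m k_m\ge 0$ gives $\dot X = K(1-a_i\dot X)$, so
\[
\dot X = \frac{K}{1+a_iK}, \qquad 1-a_i\dot X = \frac{1}{1+a_iK}>0 .
\]
Therefore $\dot x_m = k_m/(1+a_iK)\ge 0$, with equality exactly when $k_m=0$, i.e.\ when $m$ is in Mode 4. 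Likewise $\dot X\ge 0$, with equality exactly when $K=0$, i.e.\ when every prosumer is in Mode 4. Monotonicity on the whole line then follows because $x_m^*$ and $X_i$ are continuous and piecewise affine with nonnegative slope on every piece.

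The main obstacle is that a prosumer may satisfy the conditions of two modes at once on an entire segment. For example, Mode 2 with $x_m^*=\overline{p_m}-D_m$ could coincide with Mode 4. The ``if and only if'' claims need the mode label to be the one that governs how $x_m^*$ actually varies on the segment. I would handle this through the KKT analysis behind Lemma~\ref{lem:LSM_KKT}. Mode 4 is the regime where $p_m^*=\overline{p_m}$ and $p_m^{\pm *}=0$, and the multiplier $\lambda_{p,\text{ub},m}$ absorbs changes in $w_i^0$, so $x_m$ is frozen. In Modes 1--3, stationarity in $x_m$ holds with the multiplier determined by a $w_i^0$-independent cost ($c_mp_m+b_m$ or $w_i^\pm$). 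On the interior of a segment, the active set, and hence the classification, is constant.

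I would therefore assign each prosumer the mode given by its active set on the segment's interior. With that convention the differentiated equations above are valid and the equivalences hold. The degenerate case of Mode 1 with $p_m^*=\overline{p_m}$ exactly holding on an interval cannot persist, because $x_m^*$ would then have to be both constant and strictly increasing. Strict convexity of the subproblem ensures the derivative is unique, which justifies differentiating the active-set equations.
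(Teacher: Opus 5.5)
Your proposal is correct and follows the paper's route: differentiate the mode-wise stationarity conditions of Lemma~\ref{lem:LSM_KKT} on a segment with a fixed active set, sum over prosumers, and solve the scalar equation $(1+a_iK)\,\mathrm{d}X_i/\mathrm{d}w_i^0 = K$ (the paper's partition into $\Omega_1,\Omega_{23},\Omega_4$ is exactly your $k_m$ bookkeeping). Your explicit identity $1-a_i\,\mathrm{d}X_i/\mathrm{d}w_i^0 = 1/(1+a_iK)>0$ is slightly sharper than the paper's stated bound $X_i'\le 1/a_i$: it gives $\mathrm{d}x_m^*/\mathrm{d}w_i^0>0$ outside Mode~4, which is the strict inequality the ``only if'' part of the Mode~4 characterization actually needs.
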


The proof of Lemma \ref{lem:dx_dw_i^0} can be found in Appendix \ref{app:B}.

Note that when $\frac{\mathrm{d} X_i}{\mathrm{d} w_i^0} = 0$
(i.e., all prosumers are in Mode 4), $P_i$  remains
constant with respect to $w_i^0$.
This monotonicity ensures $P_i$ is uniquely determined
for any given $X_i$, guaranteeing the well-posedness of the best-response $P_i(X_i)$.

\begin{remark}
Standard multi-parametric programming typically relies on enumerating active sets to identify critical regions. However, the specific pricing coupling of the L-ESM guarantees the monotonicity of mode transitions (Lemma \ref{lem:dx_dw_i^0}). This key property degenerates the problem into tracing an ordered sequence of breakpoints along a 1D parameter, enabling direct algorithmic construction without region enumeration.
\end{remark}

\section{Best-Response Function of L-ESM and Market-Clearing Algorithm}

Based on these properties, we develop efficient algorithms to determine the best-response function. We first define auxiliary parameters
for each prosumer $m \in U_i$:
\begin{align}
    \alpha_m &:= \frac{w_i^- - b_m}{c_m} - D_m, \\
    \beta_m &:= \frac{w_i^+ - b_m}{c_m} - D_m, \\
    \gamma_m &:= \overline{p_m} - D_m.
\end{align}

Let $n = |U_i|$. The behavior of $X_i(w_i^0)$ near the lower bound of $w_i^0$ is first characterized.

\begin{lemma}\label{lem:initial_segment}
    For the L-ESM subproblem \eqref{model:L-ESM subproblem}, when the base price satisfies $\frac{w_i^0-w_i^-}{a_i(1+n)}\leq \min \{\min_{m \in U_i}(\alpha_m),\min_{m \in U_i}(\gamma_m) \}$, all prosumers operate in Mode 3, yielding $x_m^* = \frac{w_i^0-w_i^-}{a_i(1+n)}$ and $X_i = \frac{w_i^0-w_i^-}{a_i}\frac{n}{1+n}$.
\end{lemma}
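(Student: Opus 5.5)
Since the L-ESM subproblem \eqref{model:L-ESM subproblem} is a convex QP with linear constraints (strictly convex in $(p_m,x_m)$, and under $w_i^+>w_i^-$ the grid variables are pinned down), the KKT conditions are necessary and sufficient for optimality. The plan is therefore \emph{constructive verification}: propose a candidate primal--dual point in which every prosumer is in Mode 3, check that it satisfies all KKT conditions under the stated hypothesis on $w_i^0$, and then invoke uniqueness of the optimal $x^*$ (strict convexity in $x$ through the terms $\frac{a_i}{2}\sum x_m^2$) to conclude that this is \emph{the} equilibrium.

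\textbf{Construction.} Set $t:=\frac{w_i^0-w_i^-}{a_i(1+n)}$ and $x_m^*=t$ for all $m$, so $x_m^*+x_{-m}^*=nt$. The Mode-3 stationarity condition of Lemma \ref{lem:LSM_KKT}, $w_i^--w_i^0+a_i(nt)+a_i t=0$, then holds identically. For the remaining variables take $p_m^*=\min(\overline{p_m},\frac{w_i^--b_m}{c_m})$, $p_m^{+*}=0$, and $p_m^{-*}=p_m^*-D_m-t$. Choose multipliers $\lambda_{\text{eq},m}=w_i^-$ (the marginal value of energy when selling to the grid), $\lambda_{p,-,m}=0$, $\lambda_{p,+,m}=w_i^+-w_i^->0$, and put the residual of $c_mp_m+b_m-w_i^-$ onto $\lambda_{p,\text{ub},m}\ge 0$ when $p_m^*=\overline{p_m}$ (nonnegative because then $\overline{p_m}\le (w_i^--b_m)/c_m$). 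Note $p_m^*>0$ since $b_m<w_i^-$, so the lower-bound multiplier vanishes.

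\textbf{Feasibility check.} This is the step I expect to need care. One needs $p_m^{-*}\ge 0$, i.e.\ $t\le p_m^*-D_m=\min(\gamma_m,\alpha_m)$ for every $m$. This is exactly the hypothesis $t\le\min\{\min_m\alpha_m,\min_m\gamma_m\}$. Complementary slackness holds because $p_m^{+*}=0$ pairs with a positive multiplier, and $p_m^{-*}\ge0$ pairs with a zero multiplier. The boundary case $t=\min(\alpha_m,\gamma_m)$ gives $p_m^{-*}=0$, which remains consistent, with the prosumer degenerately also satisfying Mode 1 or Mode 4. Stationarity in $x_m$, namely $-w_i^0+a_ix_m+a_i\sum_j x_j+\lambda_{\text{eq},m}=0$ with the sign convention of Appendix A, reduces to the Mode-3 equation already verified.

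\textbf{Conclusion.} Summing over $m$ gives $X_i=nt=\frac{w_i^0-w_i^-}{a_i}\frac{n}{1+n}$. Uniqueness of $x^*$ ensures that no other mode assignment can yield a different $X_i$, so all prosumers are in Mode 3 as claimed.
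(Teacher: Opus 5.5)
Your proof is correct, and its computational core coincides with the paper's. Both rest on the Mode-3 stationarity relation $w_i^- - w_i^0 + a_i X_i + a_i x_m^* = 0$, which forces a common value $x_m^* = t$. In both, the hypothesis is precisely $t \le \min(\alpha_m,\gamma_m)$, i.e.\ $p_m^{-*}\ge 0$.

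What differs is the direction of the logic. The paper asserts, citing Lemma \ref{lem:LSM_KKT}, that every prosumer is in Mode 3. It then sums the stationarity equations to obtain $X_i$ and reads the hypothesis off as a consistency requirement. Since Lemma \ref{lem:LSM_KKT} only gives necessary conditions on optimal points, that argument really shows ``all in Mode 3 $\Rightarrow$ hypothesis and formulas.''

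You instead build an explicit primal--dual KKT point: $\lambda_{\text{eq},m}=w_i^-$, $\lambda_{p,+,m}=w_i^+-w_i^-$, and $\lambda_{p,\text{ub},m}$ absorbing the residual when $p_m^*=\overline{p_m}$. You then close with KKT sufficiency for the convex QP and uniqueness from strict convexity in $(p_m,x_m)$. This proves the implication in the direction actually stated. It also makes explicit the tacit assumption $w_i^+>w_i^-$, which is needed for $\lambda_{p,+,m}\ge 0$.

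The paper's summed-equation form has its own value: it is the template reused for arbitrary mode sets in Step 1 of Algorithm \ref{alg:X_general}.

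One small slip: $p_m^*>0$ fails when $\overline{p_m}=0$, which the case study allows. Taking $\lambda_{p,\text{lb},m}=0$ is still admissible there, so nothing breaks.
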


\begin{proof}
    According to Lemma \ref{lem:LSM_KKT}, under the given condition, all prosumers operate in Mode 3. Summing the corresponding stationarity conditions yields:
    \begin{equation}
        n(w_i^--w_i^0)+na_iX_i+a_iX_i = 0.
    \end{equation}
    Solving this equation yields the condition for the lemma, and further implies the expression of $x_m^*$.
\end{proof}

As the base price $w_i^0$ increases from the lower bound,
prosumers transition between different modes.
Recall that $U_{i,k} \subseteq U_i$ denotes the set of
prosumers in L-ESM $i$ operating in Mode $k$
(defined in Lemma \ref{lem:LSM_KKT}).
Such mode transitions introduce breakpoints to
the piecewise linear function $X_i(w_i^0)$, as characterized by the following proposition.
\begin{proposition}\label{pro:prosumer_move}
    In the L-ESM subproblem \eqref{model:L-ESM subproblem},
    for any prosumer $m$, the migration path with increasing $w_i^0$ is determined by the relationship between the  parameters $\alpha_m, \beta_m, \gamma_m$:
    \begin{enumerate}
        \item  $\gamma_m > \beta_m$: the path is Mode 3 $\to$ Mode 1 $\to$ Mode 2.
        \item $\gamma_m \leq \alpha_m$: the path is Mode 3 $\to$ Mode 4 $\to$ Mode 2.
        \item $\alpha_m < \gamma_m \leq \beta_m$: the path is Mode 3 $\to$ Mode 1 $\to$ Mode 4 $\to$ Mode 2.
    \end{enumerate}
    Notably, if all prosumers enter Mode 4, $X_i$ becomes constant with respect to $w_i^0$ and all prosumers remain locked in Mode 4 regardless of further price increases. The specific conditions triggering transitions are:
    \begin{enumerate}
        \item $U_{i,3}\to U_{i,1}$: $p_m^*-D_m = x_m^* = \alpha_m$.
        \item $U_{i,3}\to U_{i,4}$: $p_m^* = \overline{p_m} = x_m^*+D_m$ ($x_m^* = \gamma_m$).
        \item $U_{i,1}\to U_{i,4}$: $p_m^* = \overline{p_m} = x_m^*+D_m$ ($x_m^* = \gamma_m$).
        \item $U_{i,1}\to U_{i,2}$: $p_m^*-D_m = x_m^* = \beta_m$.
        \item $U_{i,4}\to U_{i,2}$: $X_i = \frac{w_i^0-w_i^+}{a_i}-\gamma_m$.
    \end{enumerate}
\end{proposition}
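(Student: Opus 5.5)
The plan is to reduce each prosumer's behaviour to a monotone response to an \emph{effective marginal price}, and then show that this price is non-decreasing in $w_i^0$. From the stationarity conditions in the proof of Lemma \ref{lem:LSM_KKT}, introduce the common quantity $\pi_i := w_i^0 - a_i X_i$ and the individual effective price $\mu_m := \pi_i - a_i x_m^*$. The four modes of Lemma \ref{lem:LSM_KKT} then read:
\begin{itemize}
\item Mode 3: $\mu_m = w_i^-$;
\item Mode 1: $\mu_m = c_m(x_m^*+D_m)+b_m \in (w_i^-, w_i^+)$;
\item Mode 2: $\mu_m = w_i^+$;
\item Mode 4: $x_m^* = \gamma_m$, with $\mu_m$ free between $c_m\overline{p_m}+b_m$ (or $w_i^-$) and $w_i^+$.
\end{itemize}
Consequently, the optimal pair $(x_m^*, \mu_m)$ always lies on a fixed monotone staircase curve $\Gamma_m$ in the $(x,\mu)$ plane. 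It has a vertical piece at $\mu=w_i^-$ up to $x=\min(\alpha_m,\gamma_m)$, then a slope-$c_m$ piece $x=(\mu-b_m)/c_m-D_m$ up to $\min(\beta_m,\gamma_m)$, then a flat piece at $x=\gamma_m$ (if $\gamma_m\le\beta_m$), and finally the piece at $\mu=w_i^+$. The three orderings of $\alpha_m,\beta_m,\gamma_m$ determine which pieces of $\Gamma_m$ are present and in which order. This ordering is exactly the three-way case split of the proposition.

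The key step is to show that $\pi_i$ is strictly increasing in $w_i^0$ and $\mu_m$ is non-decreasing, so that each $(x_m^*,\mu_m)$ moves monotonically along $\Gamma_m$. On a linear segment (Lemma \ref{lem:piecewise}), differentiate the mode equations. For a prosumer not in Mode 4, $\mathrm{d}x_m^* = k_m(\mathrm{d}\pi_i - a_i\,\mathrm{d}x_m^*)$, where $k_m = 1/c_m$ in Mode 1 and $k_m=\infty$ in Modes 2 and 3. This gives $\mathrm{d}x_m^* = \theta_m\,\mathrm{d}\pi_i$ with $\theta_m = k_m/(1+a_ik_m)\in(0,1/a_i]$; in Mode 4, $\theta_m=0$. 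Summing over prosumers and using $\mathrm{d}\pi_i = \mathrm{d}w_i^0 - a_i\,\mathrm{d}X_i$ yields
\begin{equation*}
\frac{\mathrm{d}\pi_i}{\mathrm{d}w_i^0}=\frac{1}{1+a_i\sum_m\theta_m}>0 .
\end{equation*}
It follows that $\mathrm{d}\mu_m = (1-a_i\theta_m)\,\mathrm{d}\pi_i \ge 0$, with strict inequality in Mode 4. This is consistent with Lemma \ref{lem:dx_dw_i^0}. Since the curve is monotone and both coordinates are non-decreasing, no prosumer can move backward along $\Gamma_m$. Continuity of the solution (Lemma \ref{lem:piecewise}) rules out jumps between pieces.

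With monotone motion established, the migration paths follow by reading off $\Gamma_m$. Lemma \ref{lem:initial_segment} gives the starting point in Mode 3.
\begin{itemize}
\item If $\gamma_m\le\alpha_m$, the vertical piece at $w_i^-$ ends at $x=\gamma_m$. The path is Mode 3 $\to$ Mode 4, and $\mu_m$ then climbs until it reaches $w_i^+$ (Mode 2).
\item If $\gamma_m>\beta_m$, the capacity is never reached before $\mu_m=w_i^+$. The path is Mode 3 $\to$ Mode 1 $\to$ Mode 2.
\item The intermediate case gives Mode 3 $\to$ Mode 1 $\to$ Mode 4 $\to$ Mode 2.
\end{itemize}
The transition conditions are the endpoints of the pieces: $x_m^*=\alpha_m$, $x_m^*=\gamma_m$, and $x_m^*=\beta_m$. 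For Mode 4 $\to$ Mode 2, set $\mu_m = w_i^0 - a_iX_i - a_i\gamma_m = w_i^+$, which gives $X_i = \frac{w_i^0-w_i^+}{a_i}-\gamma_m$. The remark on total Mode 4 follows because all $\theta_m=0$ gives $\mathrm{d}X_i=0$.

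The main obstacle is justifying the differential argument across breakpoints and handling degenerate ties, for example $\alpha_m=\gamma_m$ or several prosumers switching at once. I would address this using continuity and uniqueness of the strictly convex QP solution, so that the monotonicity of $\pi_i$ extends piecewise to the whole line.

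The \emph{locked in Mode 4} claim must also be reconciled with transition 5. Once all prosumers are in Mode 4, $X_i$ is constant, but $\pi_i$ keeps increasing, so condition 5 can eventually be met. I would therefore interpret \emph{locked} as meaning that $X_i$ stays constant until condition 5 is triggered. I would state this interpretation explicitly in the proof.
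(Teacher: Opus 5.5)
Your proof is correct, and it takes a genuinely different and more complete route than the paper's.

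\textbf{Comparison with the paper.} The paper (Appendix~\ref{app:D}) uses only the monotonicity of $x_m^*$ from Lemma~\ref{lem:dx_dw_i^0}. It then reads each path off the ascending order of the thresholds $\alpha_m,\beta_m,\gamma_m$ on the $x$-axis. You instead lift each prosumer to the pair $(x_m^*,\mu_m)$, with $\mu_m=\lambda_{\text{eq},m}$, on the fixed monotone graph $\Gamma_m$, and you show that \emph{both} coordinates are non-decreasing.

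The computation $\mathrm{d}x_m^*=\theta_m\,\mathrm{d}\pi_i$ is the one behind Lemma~\ref{lem:dx_dw_i^0}. Your new ingredient is $\mathrm{d}\mu_m=(1-a_i\theta_m)\,\mathrm{d}\pi_i$, and it is exactly what the paper's argument lacks. Inside Mode~4 one has $x_m^*\equiv\gamma_m$, so $x$-monotonicity says nothing there. It can neither show that Mode~4 is ever left nor produce transition~5. Your identity $\mathrm{d}\mu_m=\mathrm{d}\pi_i>0$ in Mode~4 does both.

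To make ``climbs until it reaches $w_i^+$'' fully rigorous, add one line. Since $\theta_m\le 1/a_i$, you get $\mathrm{d}\pi_i/\mathrm{d}w_i^0\ge 1/(1+n)$, so every piece of $\Gamma_m$ is exited at a finite $w_i^0$. The paper's route is shorter because it reuses Lemma~\ref{lem:dx_dw_i^0} verbatim. However, it only fixes the \emph{order} of the modes, not that each one is actually traversed.

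\textbf{The locking claim.} Your reservation about ``locked in Mode~4'' is justified, and your reinterpretation is the right one, because the literal claim is false. Mode~4 requires dual feasibility $\lambda_{p,+,m}=w_i^+-\mu_m\ge 0$. With all prosumers in Mode~4, $\mu_m=w_i^0-a_i\sum_k\gamma_k-a_i\gamma_m$. This exceeds $w_i^+$ as soon as $w_i^0>w_i^++a_i(\sum_k\gamma_k+\min_m\gamma_m)$. At that point the prosumer with the smallest $\gamma_m$ moves to Mode~2, and $X_i$ increases again.

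The appendix asserts that ``the KKT conditions remain satisfied for any $w_i^0$ large enough to maintain saturation.'' That assertion overlooks exactly this multiplier. It is also inconsistent with transition~5 and with every listed path ending in Mode~2. Keep your corrected statement: $X_i$ is constant on the all-Mode-4 interval, and that interval ends when condition~5 is first met.
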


The proof of Proposition \ref{pro:prosumer_move} can be found in Appendix \ref{app:D}.

Based on the piecewise linear structure and the monotonic mode transitions characterized in Proposition \ref{pro:prosumer_move}, the functions $X_i(w_i^0)$ and $P_i(X_i)$ can be efficiently constructed for arbitrary prosumer parameters by iteratively identifying the mode transitions and updating the linear coefficients.

We first present the algorithm for computing $X_i(w_i^0)$. 
The core idea is to initialize at the lowest price segment 
where all prosumers operate in Mode 3 and iteratively update their states and linear coefficients as the price increases.

\begin{algorithm}
    \caption{General Computation of $X_i(w_i^0)$}
    \label{alg:X_general}
    \KwIn{Main grid buying/selling prices $w_i^+, w_i^-$; 
          Price elasticity $a_i$; 
          Prosumer parameters $\{c_m, b_m, D_m, \overline{p_m}\}_{m \in U_i}$}
    \KwOut{Coefficients $\{k'_{X,j}, b'_{X,j}\}$ and  breakpoints $BP_j$ 
           for each linear segment}

    \tcc{Initialization}
    Calculate auxiliary parameters $\alpha_m, \beta_m, \gamma_m$ 
    for all $m \in U_i$\;
    Initialize the state of each prosumer $m$, denoted as $\text{State}_m$, 
    to Mode 3\;
    Set segment index $j \leftarrow 0$ and breakpoint $BP_0 \leftarrow -\infty$\;

    \While{true}{
        \tcc{Step 1: Construct linear segment $j$}
        Sum the stationarity equations based on current states $\text{State}_m$ 
        (Lemma \ref{lem:LSM_KKT}) to derive the affine function 
        $X_i = k'_{X,j} w_i^0 + b'_{X,j}$\;
        
        \tcc{Step 2: Determine next breakpoint}
        \For{each prosumer $m \in U_i$}{
            Calculate the candidate trigger price $w_{m}^{next}$ 
            required to transition from $\text{State}_m$ 
            to the next mode based on Proposition \ref{pro:prosumer_move}\;
        }
        $BP_{j+1} \leftarrow \min_{m} \{w_{m}^{next} \mid w_{m}^{next} > BP_j\}$\;
        
        \If{$BP_{j+1} = +\infty$}{
            Break the loop\;
        }
        
        \tcc{Step 3: Update state}
        Identify the prosumer $m^*$ corresponding to $BP_{j+1}$ 
        and update $\text{State}_{m^*}$ to the next mode\;
        $j \leftarrow j + 1$\;
    }
    \Return{The set of coefficients and breakpoints for all segments}
\end{algorithm}

With the piecewise linear function $X_i(w_i^0)$ and the prosumer 
states determined by Algorithm \ref{alg:X_general}, we now proceed 
to construct the best-response function  $P_i(X_i)$. 
Algorithm \ref{alg:P_general} presents the detailed procedure 
for this derivation.

\begin{algorithm}
    \caption{General Computation of $P_i(X_i)$}
    \label{alg:P_general}
    \KwIn{State sequence $\{\text{State}_m^{(j)}\}$ and coefficients 
          $\{k'_{X,j}, b'_{X,j}\}$ from Algorithm \ref{alg:X_general}; 
          Elasticity coefficient $a_i$; 
          Prosumer parameters $\{c_m, b_m, D_m\}_{m \in U_i}$; 
          Auxiliary parameters $\{\alpha_m, \beta_m, \gamma_m\}_{m \in U_i}$}
    \KwOut{Coefficients $\{k_{P,j}, b_{P,j}\}$ for the piecewise 
           linear function $P_i = k_{P,j}X_i + b_{P,j}$}

    \tcc{Iterate through each linear segment}
    \For{each segment index $j$}{
        \tcc{Step 1: Obtain inverse coefficients for substitution}
        Initialize $k_{P,j} \leftarrow 0$, $b_{P,j} \leftarrow 0$\;
        \eIf{$k'_{X,j} \neq 0$}{
            $k_{X,j} \leftarrow 1/k'_{X,j}$\;
            $b_{X,j} \leftarrow -b'_{X,j}/k'_{X,j}$\;
        }{
            Treat segment $j$ as a constant-$X_i$ segment and compute $P_i$ directly from the current prosumer states\;
        }
        
        \tcc{Step 2: Calculate $P_i(X_i)$ coefficients}
        \For{each prosumer $m \in U_i$}{
            \If{$\text{State}_m^{(j)} = \text{Mode 1}$}{
                $k_{P,j} \leftarrow k_{P,j} + \frac{k_{X,j} - a_i}{c_m + a_i}$\;
                $b_{P,j} \leftarrow b_{P,j} + \frac{b_{X,j} - b_m - c_m D_m}{c_m + a_i}$\;
            }
            \ElseIf{$\text{State}_m^{(j)} = \text{Mode 2}$}{
                $b_{P,j} \leftarrow b_{P,j} + \min(\beta_m, \gamma_m)$\;
            }
            \ElseIf{$\text{State}_m^{(j)} = \text{Mode 3}$}{
                $b_{P,j} \leftarrow b_{P,j} + \min(\alpha_m, \gamma_m)$\;
            }
            \ElseIf{$\text{State}_m^{(j)} = \text{Mode 4}$}{
                $b_{P,j} \leftarrow b_{P,j} + \gamma_m$\;
            }
        }
    }
    \Return{$\{k_{P,j}, b_{P,j}\}$}
\end{algorithm}

Model \eqref{model:simple_stage_with_X} becomes an MISOCP.
To ensure tractability, we establish the tightness of
its SOCP relaxation.

\begin{assumption}\label{ass:reactive_support}
For each node $j \in \mathcal{N}$, the reactive-power support variable $Q_j$ is adjustable within the compact interval $[\underline{Q_j},\overline{Q_j}]$. 
\end{assumption}

\begin{proposition}\label{pro:SOCP_relax}
    Under Assumption \ref{ass:reactive_support}, for the single-level optimization problem
    \eqref{model:simple_stage_with_X}, 
    the convex relaxation of the branch power flow $P_{ij}^2 + Q_{ij}^2 \leq l_{ij}v_{i}, \forall (i,j) \in \mathcal{E}$ is tight.
\end{proposition}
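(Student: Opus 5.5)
We argue by contradiction, using the standard exactness argument for the branch-flow SOCP relaxation on radial networks (as in \cite{farivarBranchFlowModel2013}). The new ingredient is that the reactive injections $Q_j$ are free decision variables within $[\underline{Q_j},\overline{Q_j}]$ (Assumption \ref{ass:reactive_support}). The active injections are not free: they are tied to $X_j$ through the best-response constraint \eqref{cons:best_response}.

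Let $\mathbf{z}^\star$ be an optimal solution of the relaxed problem \eqref{model:simple_stage_with_X}. Suppose some branch $(i,j)$ is strictly slack, i.e. $P_{ij}^{\star2}+Q_{ij}^{\star2}<l_{ij}^\star v_i^\star$. We will build a feasible point with strictly smaller loss $\sum r_{ij}l_{ij}$, which contradicts optimality.

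\textbf{Step 1: the perturbation.} Keep $X_j$, and hence $P_j=P_j(X_j)$, fixed at every node. This leaves \eqref{cons:best_response} and \eqref{cons:sum_X} untouched, so the MPEC part of the problem never moves and only the physical network variables change. Decrease $l_{ij}$ by a small $\varepsilon>0$. To keep the nodal balances \eqref{cons:node_P}--\eqref{cons:node_Q} at node $i$, and along the path from the root to $i$, decrease $P_{ij}$ by $r_{ij}\varepsilon$ and $Q_{ij}$ by $x_{ij}\varepsilon$. Then propagate the flow changes upstream toward the substation, which acts as the slack bus, exactly as in the construction of \cite{farivarBranchFlowModel2013}. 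For small $\varepsilon$ the slack cone constraint on $(i,j)$ stays satisfied by continuity. The upstream cones can only become looser, because the changes shrink $|P|,|Q|$ and $l$ drops while $v$ rises; this is checked via the sign conditions in the next step.

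\textbf{Step 2: voltage feasibility.} This is where the usual exactness proofs need conditions (no upper voltage binding, or the ``$\underline{A}$'' condition of Gan--Li--Topcu--Low), and it is the main obstacle. Recomputing $v$ from \eqref{cons:node_v} after the perturbation, the voltages downstream of the perturbed branches increase. This could break an upper bound $\overline{v_k}$. Here I would use Assumption \ref{ass:reactive_support}: absorb the voltage change by adjusting the reactive support $Q_k$ at the affected nodes, for example increasing reactive consumption slightly, so that $\sum 2x Q$ offsets the $v$ change. The cost of this adjustment is a second-order change in the currents $l$, whereas the loss reduction $r_{ij}\varepsilon$ is first order. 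The net effect on the objective is therefore negative for $\varepsilon$ small.

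The hard part is rigorously ensuring that this $Q$ adjustment stays inside $[\underline{Q_k},\overline{Q_k}]$. I would first try the case where some $Q_k$ is interior. If all the relevant $Q_k$ sit at their bounds, I would fall back on the standard condition that upper voltage limits are nonbinding at the optimum. I would state this explicitly as the operating regime implied by adjustable support, noting that with $\underline{Q_j}<\overline{Q_j}$ this regime generically holds.

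\textbf{Step 3: conclusion.} The perturbed point is feasible and has strictly smaller objective, contradicting the optimality of $\mathbf{z}^\star$. So every cone constraint \eqref{cons:line_limit_relaxed} is binding at the optimum, which means the relaxation is tight and recovers the AC equality \eqref{cons:line_limit}. The integer variables that encode the piecewise-linear $P_j(X_j)$ stay fixed throughout the argument, so the result carries over to the full MISOCP.
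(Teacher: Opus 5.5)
\textbf{Step~1 cannot work, for a structural reason.} In \eqref{model:simple_stage_with_X} the constraint $P_j=P_j(X_j)$ is imposed at every node, the root included, since every bus hosts an L-ESM. The model therefore has no free slack injection. Summing \eqref{cons:P_node_new} over $j\in\mathcal{N}$ telescopes to
\begin{equation*}
\sum_{(i,j)\in\mathcal{E}} r_{ij}l_{ij}=\sum_{j\in\mathcal{N}} P_j(X_j).
\end{equation*}
Once you freeze every $X_j$, the objective is constant on every point your construction can reach, so no such perturbation can have strictly smaller loss.

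Concretely, the $-r_{ij}\varepsilon$ change you push upstream has nowhere to end. At the root (call it node $0$), the balance $P_0=\sum_{k:0\to k}P_{0k}$ with $P_0=P_0(X_0)$ frozen cannot absorb it. ``The substation acts as the slack bus'' is not part of this model, and it contradicts your own choice to fix $P_j$ at every node. The Farivar--Low construction works only because the saved loss is absorbed by injections that are free to decrease (load over-satisfaction at the ends of the slack branch), which leaves all other flows and voltages untouched.

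Here the only possible absorber is a move in $X$ along the piecewise-linear maps $P_j(\cdot)$ subject to \eqref{cons:sum_X}. Nothing guarantees such a move lowers the loss at the optimum. If $X^*$ already minimizes $\sum_j P_j(X_j)$ subject to $\sum_j X_j=0$, then every feasible network completion of $X^*$ is optimal, slack cones included, and no contradiction is available. You would need a new ingredient or an extra hypothesis.

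Even granting a slack root, your claim that upstream cones ``can only become looser'' is unjustified. Under reverse flow ($P_{hk}<0$, as in the energy-surplus region), subtracting $r_{ij}\varepsilon$ enlarges $|P_{hk}|$ on a cone that may be tight, and Step~2 never checks this.

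\textbf{Step~2 does not prove the statement as posed.} When the relevant $Q_k$ sit at their bounds, you substitute the hypothesis that upper voltage limits are nonbinding at the optimum. That hypothesis is not in Assumption~\ref{ass:reactive_support}. It also rules out exactly the regime a voltage-constrained market is designed for, and ``generically holds'' is not a proof.

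The paper treats the binding case $v_j^*=\overline{v_j}$ directly. It uses the joint perturbation $l_{ij}'=l_{ij}^*-\epsilon$, $Q_j'=Q_j^*-\frac{z_{ij}}{2x_{ij}}\epsilon$ with $z_{ij}=r_{ij}^2+x_{ij}^2$, which holds $v_j$ exactly at $\overline{v_j}$. It then invokes Assumption~\ref{ass:reactive_support} to place $Q_j^*$ off its lower bound.

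Be aware, however, that the paper's leaf-branch argument also treats every $P_j^*$ and $v_i^*$ as fixed. It never accounts for the imbalance this creates at node $i$, so following it would not repair the gap in Step~1.
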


The proof of Proposition \ref{pro:SOCP_relax} can be found in Appendix \ref{app:F}.

The overall market-clearing procedure is summarized in
Algorithm \ref{alg:overall_clearing}.

\begin{algorithm}
\caption{Non-Iterative Clearing for Two-Layer Energy-Sharing Market} \label{alg:overall_clearing}
\KwIn{Network parameters; L-ESM parameters $\{w_i^+, w_i^-, a_i, \{c_m, b_m, D_m, \overline{p_m}\}_{m \in U_i}\}_{i \in \mathcal{N}}$}
\KwOut{Optimal base prices $w_i^{0*}$, sharing prices $w_i^*$, uncleared sharing energy $X_i^*$, and total power exchange $P_i^*$}
\For{each L-ESM $i \in \mathcal{N}$}{
Run Algorithm \ref{alg:X_general} to obtain segments and breakpoints of $X_i(w_i^0)$\;
Run Algorithm \ref{alg:P_general} to construct the piecewise linear function $P_i(X_i)$\;
}
Formulate the single-level optimization problem \eqref{model:simple_stage_with_X} using the obtained piecewise linear functions $P_i(X_i)$\;
Solve the MISOCP to obtain the optimal upper-layer solution, including $X_i^*$ and $P_i^*$\;
Determine the operating segment for each L-ESM based on $X_i^*$ and calculate the corresponding base price $w_i^{0*}$\;
Calculate the local sharing price $w_i^* = w_i^{0*} - a_i X_i^*$\;
\tcc{Verification Step}
\For{each L-ESM $i \in \mathcal{N}$}{
Solve the L-ESM subproblem \eqref{model:L-ESM subproblem} with the obtained $w_i^{0*}$\;
Verify that the resulting optimal aggregate variables match $X_i^*$ and $P_i^*$\;
}
\Return{$w_i^{0*}, w_i^*, X_i^*, P_i^*$}
\end{algorithm}

\section{Case Study}

\subsection{Setup}

\begin{figure}[t]
    \centering
    \includegraphics[width=\linewidth]{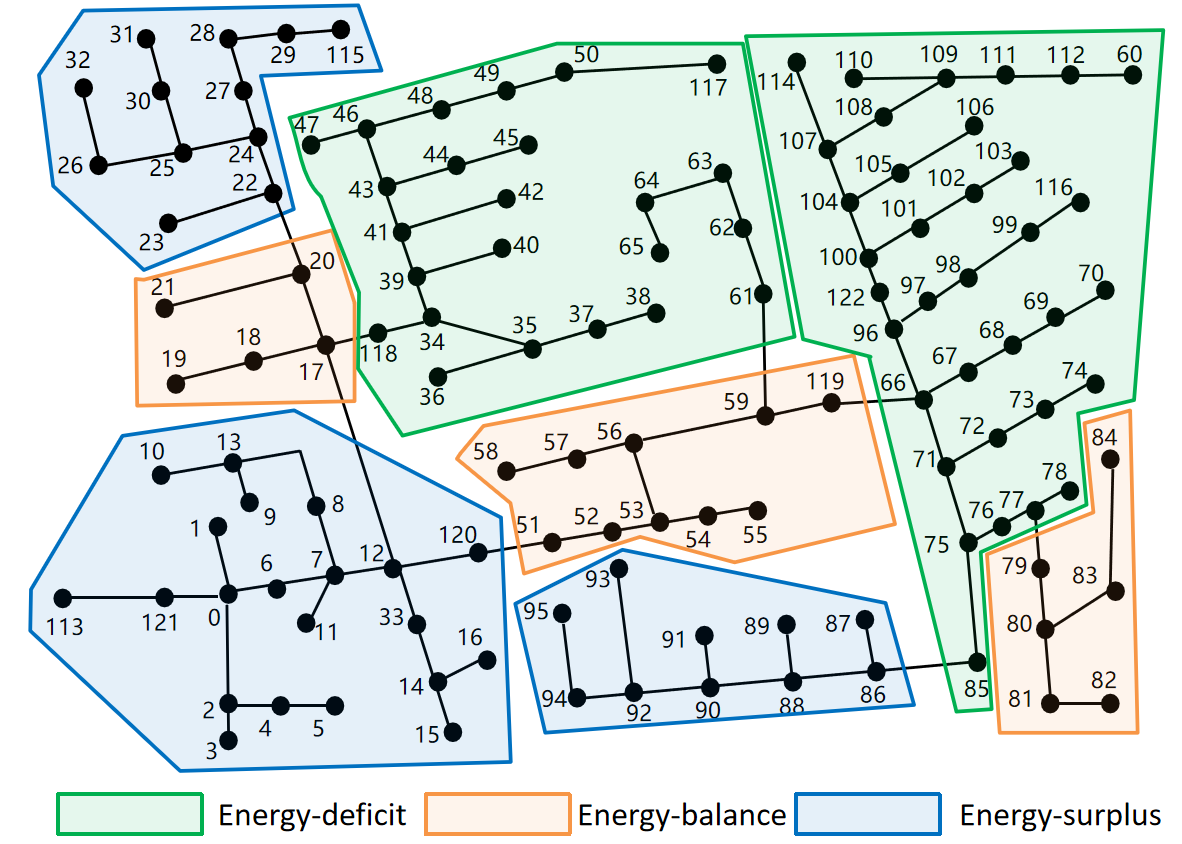}
    \caption{Configuration of the IEEE 123-bus test system with regional
             supply-demand scenarios.}
    \label{fig:case_topology}
\end{figure}

In this section, simulation experiments are conducted using Python and Gurobi on a desktop with an Intel Core i7-14650HX CPU.
The IEEE 123-bus distribution feeder is used as the test system, with network parameters adopted from the simplified MATPOWER model in \cite{boboSecondOrderConeRelaxations2020}. Each bus is associated with one L-ESM, and each L-ESM contains 100 prosumers, resulting in a total of 12,300 prosumers. The study considers single-period market clearing over a 1-h interval. Under this setting, active power values in kW are numerically equivalent to energy quantities in kWh. Prosumer parameters are synthetically generated, and the same random realization is shared across all compared methods to ensure a fair comparison.

Since renewable energy generation is fluctuating, we manually partition the
system into three typical scenarios, as illustrated in Fig. \ref{fig:case_topology}:
\begin{itemize}
    \item \textbf{Energy-surplus region:} L-ESMs in this area are set with
          abundant non-adjustable generation. The net load $D$ for prosumers
          is randomly selected from $[-40, -20]$ kW.
    \item \textbf{Energy-deficit region:} L-ESMs in this area are set with
          heavy loads and minimal local generation. The net load $D$ for
          prosumers is randomly selected from $[20, 40]$ kW.
    \item \textbf{Energy-balance region:} L-ESMs in this area maintain a
          relative balance. The net load $D$ for prosumers is randomly
          selected from $[-5, 5]$ kW.
\end{itemize}

Following \cite{suSharingEnergyWider2025}, the buying and selling prices
$w_i^+, w_i^-$ of the electric utility are set to $0.2\$/\text{kW}$ and
$0.05\$/\text{kW}$, respectively.
For each L-ESM, the price elasticity $a_i$ is randomly chosen within the
range $[2.5, 5]\times 10^{-3}/|U_i| \$/\text{kW}$.
The parameters of prosumers are randomly chosen within the ranges:
$c_m \in [0.5, 1]\times 10^{-3}\$/\text{kW}^2$,
$b_m \in [0.01, 0.05]\$/\text{kW}$, and
$\overline{p_m} \in [0, 40]$ kW.

\subsection{Validation of Proposed Algorithmic Solution}

To verify the validity of the proposed algorithmic solution, four L-ESMs are tested. The first L-ESM contains 30 prosumers, while the remaining three L-ESMs each consist of 100 prosumers. For the 100-prosumer L-ESMs, the price elasticity coefficients are set to $a = 3 \times 10^{-5}$, $a = 4 \times 10^{-5}$, and $a = 5 \times 10^{-5}$, respectively. Other parameters are randomized as described in Section V.A.

The comparison involves two solution types: numerical solutions from direct optimization and algorithmic solutions. Specifically, independent variables ($w_i^0$ or $X_i$) at each breakpoint of the algorithmic solution are substituted into the optimization problem to evaluate and compare dependent variables ($X_i$ or $P_i$).

Fig. \ref{fig:X_P_error} shows the results. The top-left and top-right subfigures compare $X_i(w_i^0)$ and $P_i(X_i)$, respectively, with all data points aligning precisely on the diagonal, confirming excellent agreement between algorithmic and numerical solutions. The bottom-left and bottom-right subfigures display relative errors for $X_i$ and $P_i$, with maximum errors of approximately 0.007\% and 0.005\%, respectively. Most errors are significantly smaller, validating the high accuracy of the algorithmic solution.

\begin{figure}
    \centering
    \includegraphics[width=\linewidth]{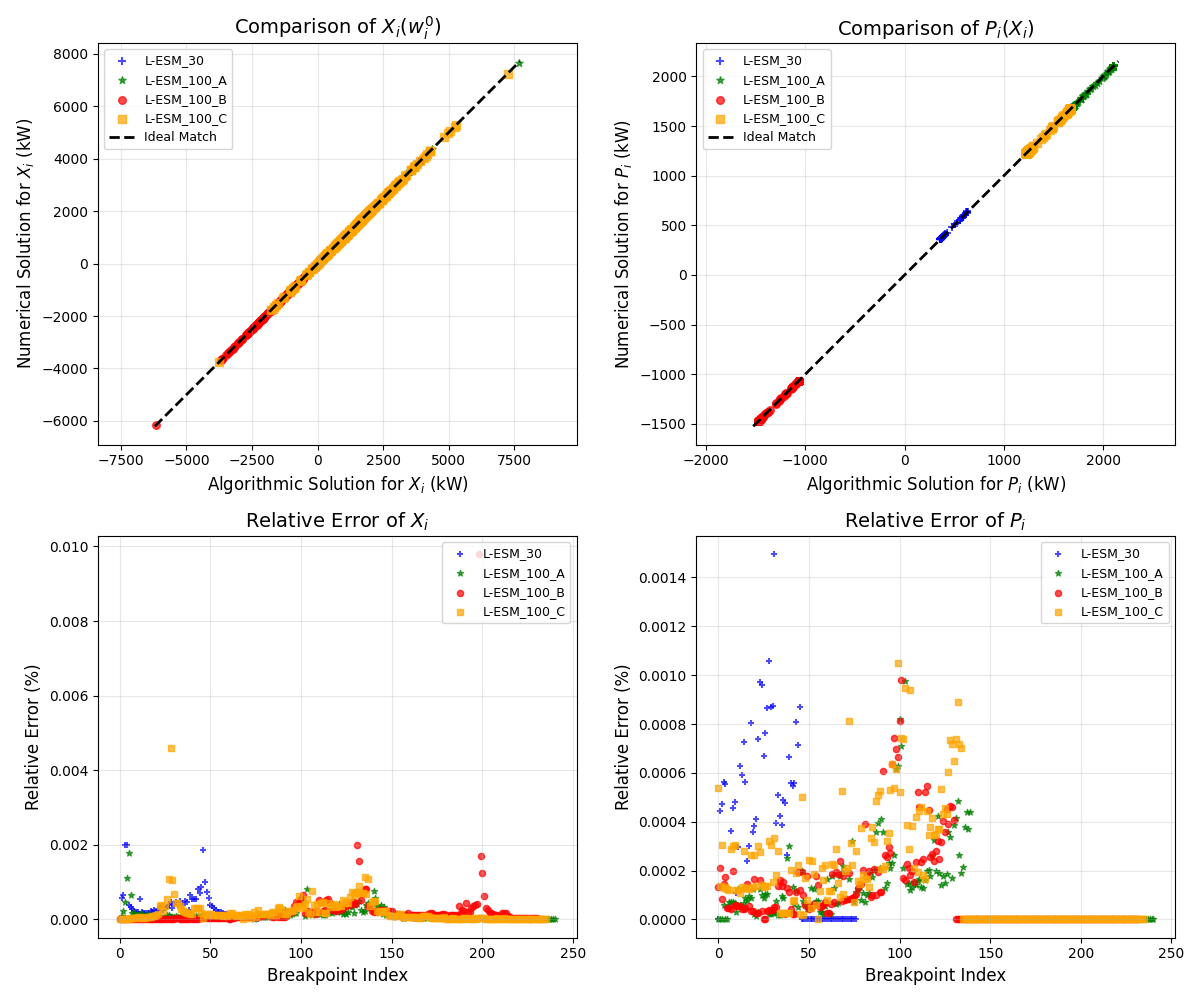}
    \caption{Algorithmic vs. numerical solutions for L-ESMs with varying prosumer counts and parameters. Top: $X_i(w_i^0)$ and $P_i(X_i)$ comparisons. Bottom: Relative errors of $X_i$ and $P_i$.}
    \label{fig:X_P_error}
\end{figure}

\subsection{Market Clearing Results and Voltage Validation}
This subsection presents the market-clearing results for the two-layer energy-sharing market and validates the physical feasibility of the obtained solution, particularly the satisfaction of voltage security constraints.

Fig. \ref{fig:market_clearing} illustrates the spatial distribution of the optimal clearing outcomes across the network. The base prices $w_i^0$ determined by the U-ESM exhibit clear regional variation. Nodes in the energy-surplus region tend to have higher base prices, reaching approximately 0.23 \$/kWh, which encourages energy export toward regions with stronger demand. By contrast, nodes in the energy-deficit region exhibit lower base prices, around -0.10 \$/kWh, to encourage energy import. Compared with the base-price profile, the corresponding sharing prices, $w_i = w_i^0 - a_i X_i$, are noticeably smoother across the feeder, suggesting that the local sharing mechanism mitigates regional price disparities while coordinating supply and demand.

\begin{figure}[h]
\centering
\includegraphics[width=1\linewidth]{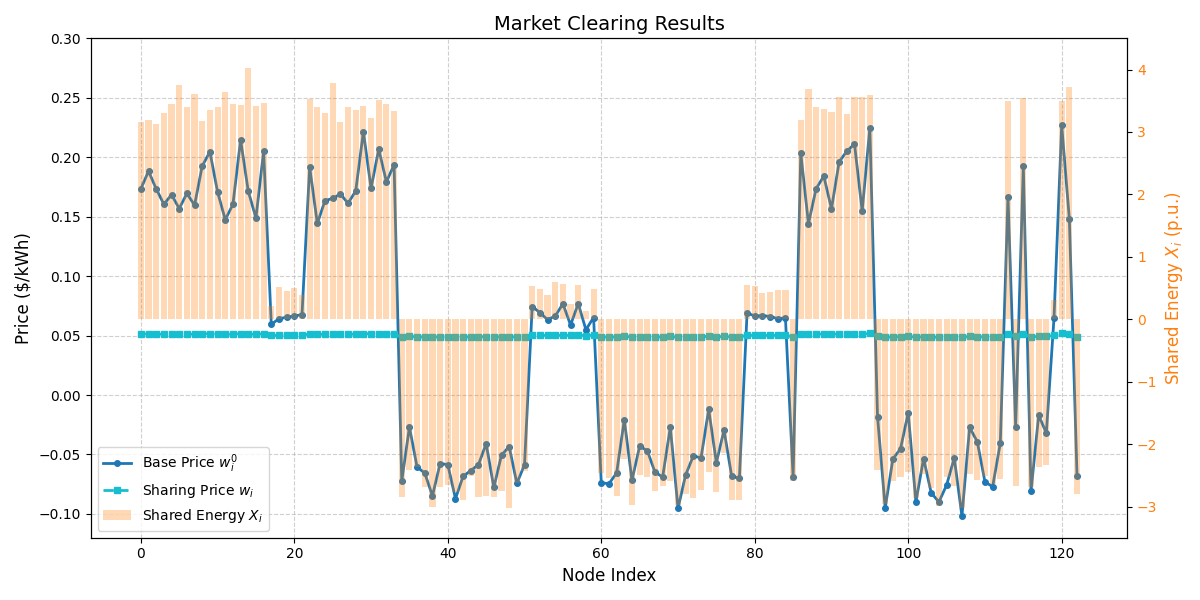}
\caption{Spatial distribution of optimal clearing results: (a) base prices $w_i^0$, (b) sharing prices $w_i$, and (c) shared energy $X_i$.}
\label{fig:market_clearing}
\end{figure}

Physical feasibility requires maintaining node voltages within secure limits. Fig. \ref{fig:voltage_comparison} and Fig. \ref{fig:PQ_comparison} compare the voltage profiles and nodal active/reactive power injection obtained by the proposed method against the benchmark method by Su et al. \cite{suSharingEnergyWider2025}, which employs a linearized power flow model and neglects explicit voltage constraints.

\begin{figure}[h]
\centering
\includegraphics[width=1\linewidth]{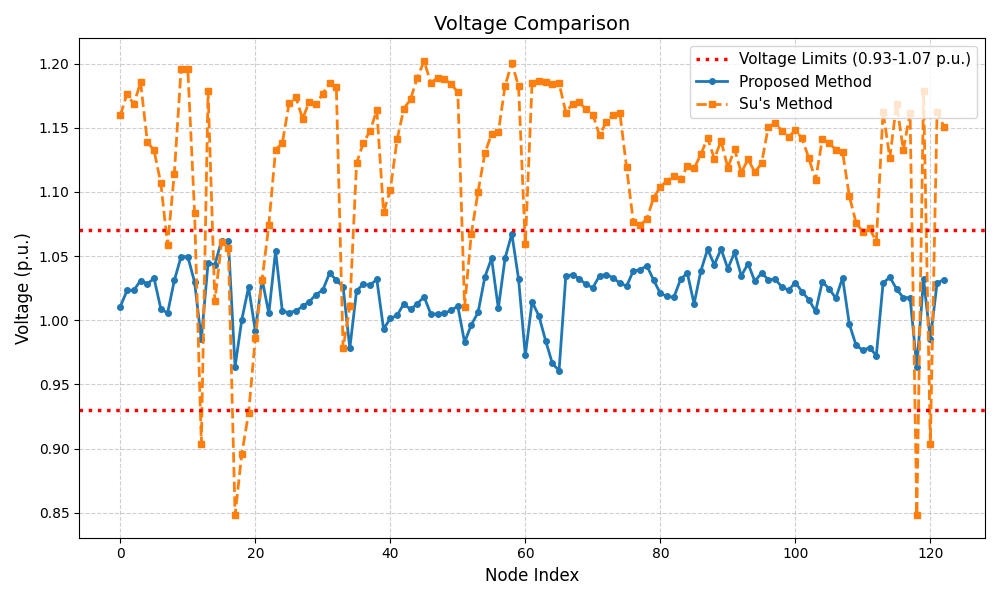}
\caption{Comparison of voltage profiles: Proposed method vs. Su's method \cite{suSharingEnergyWider2025}. The proposed method successfully maintains all voltages within the secure range [0.93, 1.07] p.u. (marked by red dashed lines), while the benchmark method suffers severe overvoltage violations.}
\label{fig:voltage_comparison}
\end{figure}

\begin{figure}
\centering
\includegraphics[width=1\linewidth]{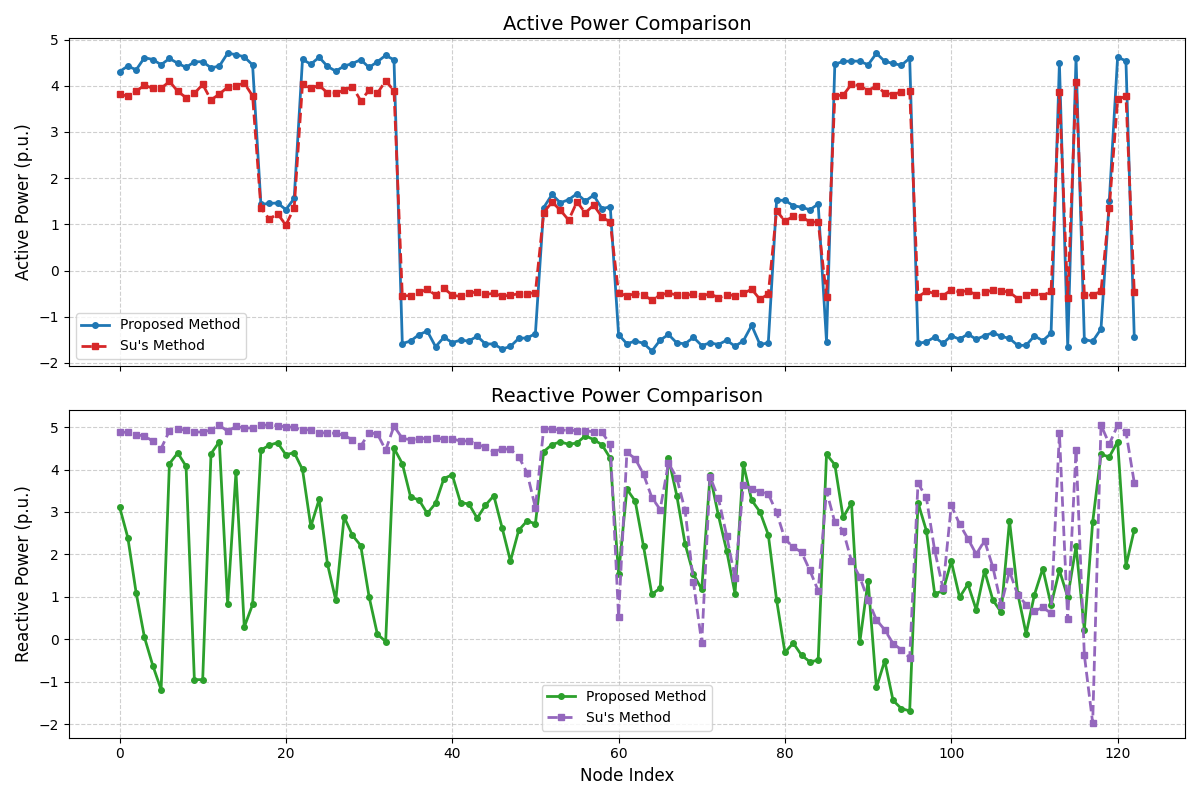}
\caption{Comparison of nodal active and reactive power injection profiles.}
\label{fig:PQ_comparison}
\end{figure}

The proposed algorithm maintains all node voltages within the secure range (0.93–1.07 p.u.). In contrast, the benchmark method causes severe violations, with voltages exceeding 1.20 p.u. at multiple nodes. These results confirm that rigorous power flow constraints are essential to ensuring the physical feasibility of market-clearing outcomes. The proposed method effectively bridges the gap between market efficiency and physical security by integrating voltage constraints directly into the non-iterative clearing algorithm.

\subsection{Computation Time Comparison}

In this subsection, we compare the computation time of the following four methods:

\begin{enumerate}
    \item \textbf{The Proposed Method:} The method introduced in this paper.
    \item \textbf{Su's Method \cite{suSharingEnergyWider2025}:} A benchmark method that does not consider voltage constraints.
    \item \textbf{The Centralized Method:} An approach that involves solving the entire optimization problem directly.
    \item \textbf{The ADMM Method:} A bilevel iterative method that incorporates both voltage constraints and the individual interests of prosumers.
\end{enumerate}

To ensure a fair comparison reflecting practical implementation, the computation times reported in Table \ref{tab:time_comparison} account for parallel processing where applicable: Su's method considers prosumer-level parallelism; the proposed method considers L-ESM-level parallelism; the ADMM method considers parallel computation of the lower-layer problems; whereas the centralized method cannot be parallelized.

\begin{table}[h!]
    \centering
    \caption{Comparison of computation times for different numbers of prosumers.}
    \label{tab:time_comparison}
    \begin{tabular}{l c c c c c}
        \toprule
        Prosumer Count & 123 & 369 & 1,230&  6,150&12,300\\
        \midrule
        {Proposed Method} & 0.33ms & 0.48ms & 1.9ms & 0.577s & 0.829s\\
        {Su's Method} & 4.03ms & 40.6ms & 0.12s & 0.028s &  0.473s\\
        {Centralized Method} & 0.70s & 148.06s & $\geq$ 1h& - &-\\
        {ADMM Method}& 43.28s & 149.02s & $\geq$ 1h & - &-\\
        \bottomrule
    \end{tabular}
\end{table}

Table \ref{tab:time_comparison} shows that the proposed method scales favorably as the system size increases. The centralized formulation becomes computationally prohibitive once the number of prosumers reaches $10^3$, and the ADMM-based method is likewise hindered by slow convergence under voltage-coupled constraints. Su's method, relying on a linearized power-flow model, remains computationally efficient, but it does not provide explicit voltage-security guarantees. By contrast, the proposed method solves the 12,300-prosumer case in 0.829~s while maintaining all nodal voltages within their limits, supporting its use for large-scale near-real-time market clearing.

\subsection{Social Efficiency}

To illustrate the economic necessity of a hierarchical, distributed market for energy sharing, we compare the average costs of prosumers across four market configurations: No Sharing (NS), Local Sharing (LS), Global Sharing (GS), and Global Sharing without Voltage Constraints (GS-NVC).

\begin{enumerate}
\item \textbf{No Sharing (NS):} Prosumers balance independently
($x_m = 0,\forall m \in U_i, \forall i \in \mathcal{N}$).
\item \textbf{Local Sharing (LS):} Prosumers trade only locally
($X_i = 0,\forall i \in \mathcal{N}$).
\item \textbf{Global Sharing (GS):} The proposed method.
\item \textbf{GS without Voltage Constraints (GS-NVC):} The proposed
method without enforcing voltage constraints, representing the
economic efficiency when physical security is relaxed.
\end{enumerate}

\begin{table}[h!]
    \centering
    \caption{Comparison of cost for different energy sharing situations.}
    \label{tab:cost_comparison}
    \begin{tabular}{l c c c c}
        \toprule
         Situations & NS & LS & GS & GS-NVC\\
        \midrule
        Cost $\$/\text{kWh}$&0.03551&0.03541&0.03454&0.03452\\
        Total cost $\$$&6,550.84&6,532.40&6,371.90&6,368.21\\
        \bottomrule
    \end{tabular}
\end{table}
Table \ref{tab:cost_comparison} shows a monotonic decrease in average cost as the sharing scope expands. Moving from NS to LS yields only a limited reduction, from 0.03551 to 0.03541 \$/\text{kWh}, because the restriction $X_i=0$ confines balancing to individual L-ESMs and does not exploit inter-regional complementarities. The GS configuration reduces the average cost further to 0.03454 \$/\text{kWh} by allowing the U-ESM to coordinate energy exchange across the feeder. The GS-NVC case yields a slightly lower cost of 0.03452 \$/\text{kWh} but exhibits serious voltage limit violation, which illustrates the expected trade-off between economic performance and network security. Overall, the results suggest that the proposed hierarchical design improves economic efficiency while preserving voltage feasibility.

\section{Conclusion}
This paper proposes a non-iterative clearing algorithm for the two-layer energy-sharing market with voltage constraints. By deriving the best-response function of the lower-layer market, the original bilevel MPEC is reformulated as a single-level MISOCP that remains tractable for large-scale systems.

Case studies of the IEEE 123-bus system with up to 12,300 prosumers yield three main findings. First, the derived best-response function accurately reproduces the lower-layer equilibrium solution. Second, the proposed method maintains nodal voltages within prescribed limits, unlike benchmark approaches that do not explicitly enforce them. Third, the resulting clearing problem can be solved within sub-second time for the largest test case considered, which suggests strong potential for near-real-time implementation. 

The present study is limited to a deterministic, single-period setting and a radial distribution network model. Future work will extend the framework to multi-period coordination with energy storage, uncertainty-aware market clearing, and privacy-preserving implementations.

\begin{appendices}
    \section{Proof of Lemma \ref{lem:LSM_KKT}}
    \label{app:A}
    \begin{proof}
    Applying the KKT conditions yields the following stationarity conditions:
    \begin{subequations}\label{app:KKT}
        \begin{align}
            c_m p_m + b_m - \lambda_{\text{eq},m} - \lambda_{p,\text{lb},m} + \lambda_{p,\text{ub},m} &= 0 \label{app:KKT_p}\\
            w_i^+ - \lambda_{\text{eq},m} - \lambda_{p,+,m} &= 0 \label{app:KKT_p+}\\
            -w_i^- + \lambda_{\text{eq},m} - \lambda_{p,-,m} &= 0 \label{app:KKT_p-}\\
            -w_i^0 + a_i x_m + a_i(x_m + x_{-m}) + \lambda_{\text{eq},m} &= 0 \label{app:KKT_x}
        \end{align}
    \end{subequations}
    
    According to Proposition 1 in \cite{suSharingEnergyWider2025}, the optimal solution automatically satisfies the complementarity constraint $p_m^+ p_m^- = 0$. 
    We next analyze the following three mutually exclusive cases for the trading variables $p_m^+$ and $p_m^-$:
    
    \textbf{Case 1:} $p_m^+ = 0$ and $p_m^- = 0$.
    From the power balance constraint, we have $p_m = D_m + x_m$.
    Substituting $\lambda_{\text{eq},m}$ from \eqref{app:KKT_x} into \eqref{app:KKT_p} and considering the bounds of $p_m$:
    \begin{itemize}
        \item If $p_m < \overline{p_m}$, then $\lambda_{p,\text{ub},m} = 0$. Rearranging the equations yields the condition for \textbf{Mode 1}.
        \item If $p_m = \overline{p_m}$, it implies the adjustable generator reaches its upper limit, corresponding to \textbf{Mode 4}.
    \end{itemize}

    \textbf{Case 2:} $p_m^+ > 0$ and $p_m^- = 0$.
    This implies $\lambda_{p,+,m} = 0$. From \eqref{app:KKT_p+}, we have $\lambda_{\text{eq},m} = w_i^+$. Substituting this into \eqref{app:KKT_x} yields the condition for \textbf{Mode 2}.

    \textbf{Case 3:} $p_m^- > 0$ and $p_m^+ = 0$.
    This implies $\lambda_{p,-,m} = 0$. From \eqref{app:KKT_p-}, we have $\lambda_{\text{eq},m} = w_i^-$. Substituting this into \eqref{app:KKT_x} yields the condition for \textbf{Mode 3}.

    \end{proof}

    \section{Proof of Lemma \ref{lem:dx_dw_i^0}} 
    \label{app:B}
        \begin{proof}
    By Lemma \ref{lem:piecewise}, the optimal solution is piecewise linear in $w_i^0$.
    Therefore, the derivatives exist within each linear segment. We analyze the derivatives on a segment over which the active constraint set remains unchanged.
    
    Based on the modes defined in Lemma \ref{lem:LSM_KKT}, we partition the prosumers into three sets for differentiation:
    \begin{itemize}
        \item $\Omega_1$: Prosumers in Mode 1.
        \item $\Omega_{23}$: Prosumers in Mode 2 or Mode 3.
        \item $\Omega_4$: Prosumers in Mode 4.
    \end{itemize}
    This partition reflects the distinct forms of the associated differential equations.
    
    Differentiating the equations satisfied by the prosumer components of the optimal solution in Lemma \ref{lem:LSM_KKT} yields:
    
    \textbf{Case 1 ($m \in \Omega_1$):} $p_m^* = x_m^* + D_m$. From the stationarity condition:
$
        (c_m + a_i) \frac{\mathrm{d} x^*_m}{\mathrm{d} w_i^0} + a_i \frac{\mathrm{d} X_i}{\mathrm{d} w_i^0} = 1
$

    \textbf{Case 2 ($m \in \Omega_{23}$):} $p_m^+$ or $p_m^-$ is active. The stationarity conditions for these modes yield the same differential equation structure:
$
        a_i \frac{\mathrm{d} x^*_m}{\mathrm{d} w_i^0} + a_i \frac{\mathrm{d} X_i}{\mathrm{d} w_i^0} = 1
$
    
    \textbf{Case 3 ($m \in \Omega_4$):} $p_m^* = \overline{p_m}$ and $x_m^* = \overline{p_m} - D_m$. This implies $x_m^*$ is constant:
$
        \frac{\mathrm{d} x^*_m}{\mathrm{d} w_i^0} = 0
$
    Denoting $x'_m := \frac{\mathrm{d} x^*_m}{\mathrm{d} w_i^0}$ and $X'_i := \frac{\mathrm{d} X_i}{\mathrm{d} w_i^0}$, we can rearrange the equations for $\Omega_1$ and $\Omega_{23}$ as:
$
        x'_m = \frac{1 - a_i X'_i}{c_m + a_i}, \quad \forall m \in \Omega_1 
$ and
$
        x'_m = \frac{1 - a_i X'_i}{a_i}, \quad \forall m \in \Omega_{23}
$
    
    Summing the individual derivatives gives the derivative of the aggregate variable $X'_i = \sum_{m \in U_i} x'_m$:
    \begin{equation*}
        \sum_{m \in \Omega_1} \frac{1 - a_i X'_i}{c_m + a_i} + \sum_{m \in \Omega_{23}} \frac{1 - a_i X'_i}{a_i} + \sum_{m \in \Omega_4} 0 = X'_i
    \end{equation*}
    
    Rearranging the terms gives the equation for $X'_i$:
    \begin{equation*}
        \left( 1 + \sum_{m \in \Omega_1} \frac{a_i}{c_m + a_i} + |\Omega_{23}| \right) X'_i = \sum_{m \in \Omega_1} \frac{1}{c_m + a_i} + \frac{|\Omega_{23}|}{a_i}
    \end{equation*}
    
    We now distinguish the result according to the composition of these sets:
    \begin{enumerate}
        \item If $\Omega_1 \cup \Omega_{23} \neq \emptyset$, the right-hand side is strictly positive, and the coefficient on the left-hand side is positive. Thus, $X'_i > 0$.
        \item If $\Omega_1 \cup \Omega_{23} = \emptyset$ (i.e., all prosumers are in $\Omega_4$), the equation reduces to $X'_i = 0$.
    \end{enumerate}

    Consequently, $X'_i \geq 0$ always holds. Substituting $X'_i \geq 0$ back into the individual equations:
    \begin{itemize}
        \item For $m \in \Omega_1 \cup \Omega_{23}$: Since $X'_i \leq \frac{1}{a_i}$ (implied by the equation structure), we have $x'_m \geq 0$. Specifically, if $X'_i > 0$, then $x'_m > 0$.
        \item For $m \in \Omega_4$: $x'_m = 0$ by definition.
    \end{itemize}
    
    This result establishes both the monotonicity and equality conditions, thereby completing the proof.
    \end{proof}

    \section{Proof of Proposition \ref{pro:prosumer_move}}
    \label{app:D}
    \begin{proof}
    By Lemma \ref{lem:LSM_KKT}, the boundaries between modes correspond to specific values of the shared energy $x_m^*$:
    \begin{itemize}
        \item $U_{i,3} \to U_{i,1}$ occurs when $x_m^* = \frac{w_i^--b_m}{c_m} - D_m = \alpha_m$.
        \item $U_{i,1} \to U_{i,2}$ occurs when $x_m^* = \frac{w_i^+-b_m}{c_m} - D_m = \beta_m$.
        \item Entry into Mode 4 occurs when $x_m^* = \overline{p_m} - D_m = \gamma_m$.
    \end{itemize}

    Since $x_m^*$ is non-decreasing with respect to $w_i^0$ (Lemma \ref{lem:dx_dw_i^0}), the sequence of mode transitions strictly follows the ascending order of the set $\{\alpha_m, \beta_m, \gamma_m\}$:
    \begin{enumerate}
        \item If $\gamma_m > \beta_m$, the order is $\alpha_m < \beta_m < \gamma_m$, yielding the path Mode 3 $\to$ Mode 1 $\to$ Mode 2.
        \item If $\gamma_m \leq \alpha_m$, the order is $\gamma_m \leq \alpha_m < \beta_m$, yielding the path Mode 3 $\to$ Mode 4 $\to$ Mode 2.
        \item Otherwise, the order is $\alpha_m < \gamma_m \leq \beta_m$, yielding the path Mode 3 $\to$ Mode 1 $\to$ Mode 4 $\to$ Mode 2.
    \end{enumerate}

    Finally, consider the locking mechanism. If $U_i = U_{i,4}$, then $X_i = \sum_{m \in U_i} \gamma_m$ is constant. Because the optimal state $x_m^* = \gamma_m$ is enforced by the physical capacity limit $\overline{p_m}$, the partial derivative $\frac{\partial x_m^*}{\partial w_i^0} = 0$. Moreover, the KKT conditions remain satisfied for any $w_i^0$ large enough to maintain saturation. Hence, all prosumers remain locked in Mode 4.
\end{proof}

        \section{Proof of Proposition \ref{pro:SOCP_relax}}
    \label{app:F}
\begin{proof}
Let $\mathcal{S}^* = \{X_i^*, P_j^*, Q_j^*, P_{ij}^*, Q_{ij}^*, l_{ij}^*, v_i^*\}$ denote an optimal solution set. 
Given the optimal uncleared energy $X_j^*$ from the lower-level equilibrium, the active power injection $P_j^* = P_j(X_j^*)$ can be treated as a fixed parameter in the upper-level constraints. Under Assumption \ref{ass:reactive_support}, tightness is proved by backward induction on the radial network topology.

\textbf{1. Leaf Branches:}
Consider a branch $(i,j) \in \mathcal{E}$ where $j$ is a leaf node. At the optimal solution, the power balance equations \eqref{cons:node_P} and \eqref{cons:node_Q} become $
    P_{ij}^* = r_{ij}l_{ij}^* - P_j^*$ and $
    Q_{ij}^* = x_{ij}l_{ij}^* - Q_j^*$. 
We consider two cases up to the status of voltage constraints.

\textbf{Case 1: $v_j^* < \overline{v_j}$.}
Assume for contradiction that the relaxation is not tight, i.e., $l_{ij}^* v_i^* > P_{ij}^{*2} + Q_{ij}^{*2}$.
Substituting the optimal values $P_{ij}^*$ and $P_{ij}^*$ into this inequality yields:
\begin{equation}\label{eq:app_region}
    l_{ij}^* v_i^* - (r_{ij}l_{ij}^* - P_j^*)^2 - (x_{ij}l_{ij}^* - Q_j^*)^2 > 0.
\end{equation}
Consider the function $f(l) = l v_i^* - (r_{ij}l - P_j^*)^2 - (x_{ij}l - Q_j^*)^2$. This function is a strictly concave quadratic in $l$. The feasible region for $l_{ij}$ is the closed interval $\mathcal{L}_{ij} = \{l \mid f(l) \geq 0\}$.
Since $f(l_{ij}^*) > 0$, the point $l_{ij}^*$ lies in the interior of $\mathcal{L}_{ij}$. Therefore, there exists some $l'_{ij} < l_{ij}^*$ such that $f(l'_{ij}) \geq 0$. Because $v_j^* < \overline{v_j}$, $l'_{ij}$ can be chosen sufficiently close to $l_{ij}^*$ so that the voltage constraint $v_j(l'_{ij}) \leq \overline{v_j}$ remains satisfied. This feasible point yields a smaller objective value, which contradicts the optimality of $l_{ij}^*$. Hence, the constraint must be tight.

\textbf{Case 2: $v_j^* = \overline{v_j}$.}
In this scenario, decreasing $l_{ij}$ increases $v_j$, potentially violating the upper voltage limit. To analyze the trade-off, we express the voltage magnitude $v_j$ as an affine function of the decision variables $l_{ij}$ and $Q_j$:
\begin{equation}
    v_j(l_{ij}, Q_j) = v_i^* - z_{ij}l_{ij} + 2r_{ij}P_j^* + 2x_{ij}Q_j,
\end{equation}
where $z_{ij} = r_{ij}^2 + x_{ij}^2$. The partial derivatives  are: $   \frac{\partial v_j}{\partial l_{ij}} = -z_{ij} < 0$, and $\frac{\partial v_j}{\partial Q_j} = 2x_{ij} > 0$, respectively.

Thus, decreasing $l_{ij}$ increases $v_j$, whereas decreasing $Q_j$ decreases $v_j$. By Assumption \ref{ass:reactive_support}, $Q_j^*$ is not located at the lower boundary of its feasible region when $v_j^* = \overline{v_j}$. Therefore, there exists a margin $\bar{\delta} > 0$ such that $Q_j^* - \bar{\delta}$ remains feasible.

We construct a joint perturbation: $l_{ij}' = l_{ij}^* - \epsilon$ and $Q_j' = Q_j^* - \delta$ with $\epsilon, \delta > 0$. To maintain the voltage constraint $v_j(l_{ij}', Q_j') = \overline{v_j}$, we use the affine expression above:
\begin{equation}
    v_j^* + \frac{\partial v_j}{\partial l_{ij}}(-\epsilon) + \frac{\partial v_j}{\partial Q_j}(-\delta) = \overline{v_j}.
\end{equation}
Since $v_j^* = \overline{v_j}$, solving for $\delta$ yields:
$
    \delta = \frac{z_{ij}}{2x_{ij}}\epsilon.
$

Choosing $\epsilon$ sufficiently small ensures that $\delta < \bar{\delta}$ and therefore preserves the feasibility of the reactive-power constraint. Define the function $g(\epsilon) = l_{ij}' v_i^* - P_{ij}'^2 - Q_{ij}'^2$, which represents the slack of the SOC constraint under perturbation, where $P_{ij}' = P_{ij}^* - r_{ij}\epsilon$ and $Q_{ij}' = Q_{ij}^* - x_{ij}\epsilon + \delta$. If the SOC constraint were not tight at the optimum, then $g(0) > 0$. By continuity of $g$, there exists a sufficiently small $\epsilon^* > 0$ such that $g(\epsilon) > 0$ for all $\epsilon \in [0, \epsilon^*]$.
Therefore, the perturbed solution $(l_{ij}^* - \epsilon^*, Q_j^* - \frac{z_{ij}}{2x_{ij}}\epsilon^*)$ remains feasible and achieves a smaller objective value, contradicting the optimality of $\mathcal{S}^*$. Hence, the SOC constraint must be tight.

\textbf{2. Inductive Step:}
Assume that for all downstream branches $(j,k)$, the constraints are tight, implying $l_{jk}^*$, $P_{jk}^*$, and $Q_{jk}^*$ are determined constants. Define the aggregate constants:
\begin{align}
    C_P &= P_j^* - \sum_{k:j \to k} P_{jk}^* + \sum_{k:j \to k} r_{jk}l_{jk}^*, \\
    C_Q &= Q_j^* - \sum_{k:j \to k} Q_{jk}^* + \sum_{k:j \to k} x_{jk}l_{jk}^*.
\end{align}
The power balance at node $j$ gives:
$P_{ij}^* = r_{ij}l_{ij}^* - C_P$ and $Q_{ij}^* = x_{ij}l_{ij}^* - C_Q$. Substituting into the SOC constraint yields a problem that is structurally identical to the leaf-branch case. Repeating the same argument shows that the constraint must also be tight for branch $(i,j)$.

By induction, the proposition holds for all $(i,j) \in \mathcal{E}$.
\end{proof}

\end{appendices}

\bibliographystyle{IEEEtran}
\bibliography{references}



\end{document}